\newtheorem{theorem}{Theorem}[section]
\newtheorem{prop}[theorem]{Proposition}
\newtheorem{lemma}[theorem]{Lemma}
\newtheorem{cor}[theorem]{Corollary}
\theoremstyle{definition}
\newtheorem{defini}[theorem]{Definition}
\newtheorem{remark}[theorem]{Remark}
\newcommand{\RR}{\mathbb R}
\newcommand{\ZZ}{\mathbb Z}
\newcommand{\Sign}{{\rm Sign}}
\newcommand{\tr}{{\rm Tr}^{\boxtimes 2}}
\newcommand{\Zdam}{Z_{O\hspace{-.175em}-\hspace{-.175em}O}}
\numberwithin{equation}{section}
\begin{document}
\begin{center}

{\huge A note on the $\Theta$-invariant of 3-manifolds}
\vskip15mm

{\large 
ALBERTO S. CATTANEO\footnote{University of Zurich, email: cattaneo@math.uzh.ch} and  TATSURO SHIMIZU\footnote{Research Institute for Mathematical Sciences, Kyoto University, \\email: shimizu@kurims.kyoto-u.ac.jp}}

\end{center}
\vskip5mm
\begin{abstract}
In this note, we revisit the $\Theta$-invariant as defined by 
R.~Bott and the first author in \cite{BC2}.
The $\Theta$-invariant is an invariant of rational homology 3-spheres with acyclic orthogonal local systems, which is a generalization of the 2-loop term of the Chern-Simons perturbation theory.
The $\Theta$-invariant can be defined when a cohomology group is vanishing.
In this note, we give a slightly modified version of the $\Theta$-invariant that we can define even if the cohomology group is not vanishing.
\end{abstract}
\section{Introduction}\label{intro} 
In 1998, R.~Bott and the first author defined topological invariants of rational homology 
spheres with acyclic orthogonal local systems in \cite{BC1}, \cite{BC2}.
These invariant were inspired by the Chern-Simons perturbation theory developed by M.~Kontsevich in \cite{Kon}, S.~Axelrod and M.~I.~Singer in \cite{AS}.
The Chern-Simons perturbation theory gives invariants of 
3-manifolds with flat connections of the trivial $G$-bundle over the 3-manifold,
where $G$ is a semi-simple Lie group.
The composition of adjoint representation of $G$ and the holonomy representation 
of the flat connection gives an orthogonal local system.

In \cite{BC2}, Bott and the first author constructed a real valued invariant, called $\Theta$-invarant (In this note, we denote by $Z_{\Theta}$ the corresponding term), which is a generalization of a 2-loop term of Chern-Simons perturbation theory.
The vanishing of a cohomology group 
(denoted by $H^*_-(\Delta;\pi_1^{-1}E\otimes \pi_2^{-1}E)$ in \cite{BC2}, $H^*_-(\Delta;E_{\rho}\boxtimes E_{\rho})$ in this note) plays an important role
in the construction of the $\Theta$-invariant $Z_{\Theta}$.
There are few gaps in the proof of this vanishing (Lemma 1.2 of \cite{BC2}).
In this note, we show that a linear combination of $Z_{\Theta}$ and another term $\Zdam$ is, however, a topological invariant of closed 3-manifolds with orthogonal acyclic local systems, when the local system is given by using a holonomy representation of a flat connection.
The term $\Zdam$ is also related to the 2-loop term of the Chern-Simons perturbation theory.
We note that the second author proved that when $G=SU(2)$, $Z_{\Theta}$ itself is an invariant of closed 3--manifolds with orthogonal local systems in \cite{Shimizu2}.

The organization of this paper is as follows.
In Section~2 we give a modified version of
the Bott-Cattaneo $\Theta$-invariant without proof.
In Section~3 and Section~4 we prove a proposition and a theorem about well-definedness of the invariant stated in Section~2.
Both the invariant defined in Section~2 of this note and
the $\Theta$-invariant depend on the choice of a framing of the 3-manifold. 
In Section 5 we introduce a framing correction.

\subsection*{Orientation convention}
In this note, all manifolds are oriented.
Boundaries are oriented by the outward normal first convention.
Products of oriented manifolds are oriented by the order of the factors.
The interval $[0,1]\subset \RR$ is oriented from $0$ to $1$.
\subsection*{Acknowledgments}
A. S. C. acknowledges partial support of SNF Grant No. 200020 172498/1.
This research was (partly) supported by the NCCR SwissMAP,  funded by the Swiss
National Science Foundation, and by the COST Action MP1405 QSPACE, supported by
COST (European Cooperation in Science and Technology).
T.~S. expresses his
appreciation to Professor Tadayuki Watanabe for his
helpful comments and discussion on the Chern-Simons perturbation theory.
This work was (partly) supported by JSPS KAKENHI Grant Number JP15K13437.
\section{The invariant}
Let $M$ be a closed oriented framed 3-manifold,
namely a trivialization of the tangent bundle of $M$ is fixed.
We take a metric on $M$ compatible with the framing.
Let $\rho: \pi_1\to G$ be a representation of the fundamental group into a semi-simple Lie group $G$.
We denote by ${\rm ad}:G\to {\mathfrak g}$ the adjoint representation of $G$, where $\mathfrak g$ is the Lie algebra of $G$.
Since $G$ is semi-simple, the representation ${\rm ad}\circ \rho$ is orthonormal with respect to the Killing form.
A local system is a covariant functor from the fundamental groupoid of $M$ to the category of finite 
dimensional vector spaces.
Note that a representation of $\pi_1(M)$ gives a local system.  
We denote by $E_{\rho}$ the local system given by ${\rm ad}\circ \rho$. 
We assume that $E_{\rho}$ is acyclic, namely
$$H^*(M;E_{\rho})=0.$$
In this note, we say that such a representation $\rho$ is {\it acyclic}.
\subsection{A compactification of a configuration space}\label{sec21}
Let $\Delta=\{(x,x)\mid x\in M\}\subset M^2$ be the diagonal.
We identify $\Delta$ with $M$ by
$$\Delta\ni (x,x)\to x\in M.$$
We orient $\Delta$ by using this identification.
We denote by $\nu_{\Delta}$ the normal bundle of $\Delta$ in $M^2$.
We identify $\nu_{\Delta}$ with the tangent bundle $TM$ via the isomorphism defined by
$$TM\stackrel{\cong}{\to} \nu_{\Delta}, (x,v)\mapsto ((x,x),(-v,v))$$
where $x\in M$ and $v\in T_xM$.
On the other hand, $M$ is framed.
Then $TM$ is identified with $M\times\RR^3$.
Thus $\nu_{\Delta}$ is identified with $M\times \RR^3$.
  
Let $C_2(M)=B\ell(M^2,\Delta)$ be the compact 6-dimensional manifold
with the boundary obtained by the
real blowing up of $M^2$ along $\Delta$.
We denote by 
$$q:C_2(M)\to M^2$$
the blow-down map.
As manifolds, $$C_2(M)=(M^2\setminus\Delta)\cup S\nu_{\Delta}$$
and 
$q(S\nu_{\Delta})=\Delta$.
Here $S\nu_{\Delta}$ is the unit sphere bundle of $\nu_{\Delta}$
with respect to the metric on $M$.
$C_2(M)$ is a compactification of the configuration space $M^2\setminus \Delta$ of two distinct points. Obviously, $\partial C_2(M)=S\nu_{\Delta}$.

$S\nu_{\Delta}$ is identified with $\Delta\times S^2$.
We denote by 
$$p:\partial C_2(M)=\Delta\times S^2\to S^2$$
the projection.
We use the same symbol $q$ for 
the restriction map $q|_{\partial C_2(M)}:\partial C_2(M)(=\Delta\times S^2)\to \Delta$ of the blow-down map $q$.   

\subsection{The natural transformations $c$ and ${\rm Tr}$}
The killing form gives an isomorphism ${\mathfrak g}\otimes {\mathfrak g}\cong {\mathfrak g}^*\otimes {\mathfrak g}^*$.
Let $\bf 1\in \mathfrak g\otimes \mathfrak g$ the element corresponding to the killing form in $\mathfrak g^*\otimes \mathfrak g^*$.
By using an orthonormal basis $e_1,\ldots,e_{\dim \mathfrak g}\in \mathfrak g$ of $\mathfrak g$,
$\bf 1$ can be described as
$${\bf 1}=\sum_{i=1}^{\dim\mathfrak g}e_i\otimes e_i.$$
$\bf 1\in \mathfrak g\otimes \mathfrak g$ is invariant under the diagonal action of $\pi_1(M)$.
Thus we have a natural transformation
$$c:\underline{\RR}\to E_{\rho}\otimes E_{\rho}, 1\mapsto \bf 1.$$
Here $\underline{\RR}$ is the trivial local system, namely a local system
corresponding to the 1-dimensional trivial representation of $\pi_1(M)$.

We define a natural transformation 
$${\rm Tr}:E_{\rho}\otimes E_{\rho}\otimes E_{\rho} \to \underline{\RR}$$
as follows: for $x,y,z\in \mathfrak g$,
$${\rm Tr}(x\otimes y\otimes z)=\langle [x,y],z\rangle$$
where $\langle,\rangle$ is the Killing form and $[,]$ is the Lie bracket.

Let $\pi_1,\pi_2:M^2\to M$ be the projections defined by
$\pi_1(x_1,x_2)=x_1, \pi_2(x_1,x_2)=x_2$.
$\pi_1^*E_{\rho}\otimes \pi_2^*E_{\rho}$ is a local system on $M^2$.
We denote $E_{\rho}\boxtimes E_{\rho}=\pi_1^*E_{\rho}\otimes \pi_2^*E_{\rho}$.
We remark that $E_{\rho}\boxtimes E_{\rho}|_{\Delta}=
E_{\rho}\otimes E_{\rho}$.
The pull-back
$$F_{\rho}=q^*(E_{\rho}\boxtimes E_{\rho})$$
is a local system on $C_2(M)$.
Clearly, $F_{\rho}|_{\partial C_2(M)}=q^*(E_{\rho}\otimes E_{\rho})$.
\subsection{The involution $T$ on $C_2(M)$}
The involution $T_0:M^2\to M^2$ defined by $T_0(x_1,x_2)=(x_2,x_1)$ induces
an involution $T:C_2(M)\to C_2(M)$.
$T_0,T$ induce homomorphisms $T_0^*, T^*$ on the cohomology groups 
$H^*(M^2,E_{\rho}\boxtimes E_{\rho})$, $H^*(C_2(M);F_{\rho})$, $H^*(\Delta;E_{\rho}\otimes E_{\rho})$ and on the space of differential $k$ forms $\Omega^k(C_2(M);F_{\rho})$.
We denote by $H^*_+(M^2;E_{\rho}\boxtimes E_{\rho})$, $H^*_-(M^2;E_{\rho}\boxtimes E_{\rho})$ the 
$+1,-1$ eigenspaces of the homomorphism $T^*_0$ respectively.
We use similar notations $H^*_+(C_2(M);F_{\rho}), H^*_+(\Delta,E_{\rho}\otimes E_{\rho}),
\Omega^k_+(C_2(M);F_{\rho})...$ in the same manner.

Let $T_{S^2}:S^2\to S^2$ be the involution defined as $T_{S^2}(x)=-x$ for any $x\in S^2$.
Since the metric on $M$ is compatible with the framing, we have 
$p\circ T|_{\partial C_2(M)}=T_{S^2}\circ p:\partial C_2(M)\to S^2$.
\subsection{The invariant}
Take a 2-form $\omega_{S^2}\in\Omega^2(S^2;\RR)$ on $S^2$ satisfying $\int_{S^2}\omega_{S^2}=1$
and $T_{S^2}^*\omega_{S^2}=-\omega_{S^2}$.
$p^*\omega_{S^2}$ is a closed 2-form on $\partial C_2(M)$.
Thus $$c_*(p^*\omega_{S^2})=p^*\omega_{S^2}{\bf 1}$$  
is a closed 2-form on $\partial C_2(M)$ such that $(T|_{C_2(M)})^*p^*\omega_{S^2}{\bf 1}=-p^*\omega_{S^2}{\bf 1}$.
Therefore the closed 2-form $p^*\omega_{S^2}{\bf 1}$ represents a cohomology class in 
$H^2_-(\partial C_2(M);F_{\rho}|_{\partial C_2(M)})$:
$$[p^*\omega_{S^2}{\bf 1}]\in H_-^2(\partial C_2(M);F_{\rho}|_{\partial C_2(M)}).$$
\begin{prop}\label{prop21}
There exist 2 forms
$\omega\in \Omega^2(C_2(M);F_{\rho})$ and
$\xi\in \Omega^2(\Delta;E_{\rho}\otimes E_{\rho})$ satisfying the following conditions:
\begin{itemize}
\item[(1)] $\omega|_{\partial C_2(M)}=p^*\omega_{S^2}{\bf 1}+q^*\xi$,
\item[(2)] $T^*\omega=-\omega, (T_0|_{\Delta})^*\xi=-\xi$,
namely $\omega\in \Omega^2_-(C_2(M);F_{\rho}), \xi\in \Omega^2_-(\Delta;E_{\rho}\otimes E_{\rho})$.
\end{itemize}
Furthermore, the cohomology class $[\xi]\in H^2_-(\Delta;E_{\rho}\otimes E_{\rho})$ 
is independent of the choice of $\xi$.
\end{prop}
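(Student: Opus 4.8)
The plan is to establish Proposition~\ref{prop21} in two stages: first the existence of the pair $(\omega,\xi)$, then the independence of the cohomology class $[\xi]$. For existence, I would start from the closed $2$-form $p^*\omega_{S^2}{\bf 1}$ on $\partial C_2(M)$, which we already know represents a class in $H^2_-(\partial C_2(M);F_\rho|_{\partial C_2(M)})$. The first step is to extend this form to a $2$-form $\widetilde\omega$ on all of $C_2(M)$ with values in $F_\rho$; since $\partial C_2(M)$ is a neighborhood-deformation-retract subcomplex of $C_2(M)$, such an extension always exists (choose a collar of the boundary, push the form in, and cut off with a bump function), and by averaging $\widetilde\omega$ against the involution $T$ — replacing $\widetilde\omega$ by $\tfrac12(\widetilde\omega - T^*\widetilde\omega)$, which is legitimate because $T^*(p^*\omega_{S^2}{\bf 1}) = -p^*\omega_{S^2}{\bf 1}$ — we may assume $\widetilde\omega \in \Omega^2_-(C_2(M);F_\rho)$. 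This $\widetilde\omega$ need not be closed, but its restriction to the boundary is; the strategy is then to correct it so that condition (1) holds with the $q^*\xi$ term absorbing the discrepancy.

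The key step is to pin down $\xi$. Restricting $\widetilde\omega$ to $\partial C_2(M) = \Delta\times S^2$ gives a $2$-form whose "spherical part" is $p^*\omega_{S^2}{\bf 1}$ by construction; integrating $\widetilde\omega|_{\partial C_2(M)}$ over the $S^2$-fiber (i.e. pushing forward along $q|_{\partial C_2(M)}$, using that $\int_{S^2}\omega_{S^2}=1$) produces a candidate $2$-form on $\Delta$ with values in $E_\rho\otimes E_\rho$, and one checks that $\widetilde\omega|_{\partial C_2(M)} - p^*\omega_{S^2}{\bf 1}$ is, fiberwise in the $S^2$ direction, pulled back from $\Delta$; this difference is $q^*\xi$. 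Alternatively and more cleanly: the relative cohomology class of the pair shows that the obstruction to extending $p^*\omega_{S^2}{\bf 1}$ to a \emph{closed} form on $C_2(M)$ lives in $H^3(C_2(M),\partial C_2(M);F_\rho)$, and pairing with $\Delta$ via the long exact sequence identifies $\xi$ up to a coboundary. The antisymmetry $(T_0|_\Delta)^*\xi = -\xi$ is inherited from $T^*\widetilde\omega = -\widetilde\omega$ together with the compatibility $p\circ T = T_{S^2}\circ p$ on the boundary, since fiber integration over $S^2$ intertwines $T^*$ and $(T_0|_\Delta)^*$ up to the sign coming from $T_{S^2}^*\omega_{S^2} = -\omega_{S^2}$ — two sign flips giving none, so $\xi$ is genuinely anti-invariant. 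Having produced $\xi$, one checks that $d\xi$ and the remaining failure of $d\widetilde\omega$ to vanish can be killed by a further correction term supported near $\partial C_2(M)$, yielding the desired $\omega$ and completing the existence part; here one uses acyclicity of $E_\rho$ to ensure $H^*(M;E_\rho\otimes E_\rho$-type obstructions) on $\Delta \cong M$ behave well, though for the $2$-form level statement the main input is just that forms extend across collars.

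For the independence of $[\xi]\in H^2_-(\Delta;E_\rho\otimes E_\rho)$: suppose $(\omega,\xi)$ and $(\omega',\xi')$ both satisfy (1) and (2). Then $\omega - \omega'$ is a $2$-form in $\Omega^2_-(C_2(M);F_\rho)$ whose restriction to $\partial C_2(M)$ equals $q^*(\xi-\xi')$. I would use the fiber integration (push-forward) map $q_*\colon \Omega^\bullet(\partial C_2(M);F_\rho|_\partial) \to \Omega^{\bullet-2}(\Delta;E_\rho\otimes E_\rho)$ along the $S^2$-fibers: applying $q_*$ to $(\omega-\omega')|_{\partial C_2(M)} = q^*(\xi-\xi')$ and using $q_*q^* = 0$ on the nose in fiber degree... — more carefully, the right tool is that $q_*\bigl((\omega-\omega')|_{\partial C_2(M)}\bigr)$ vanishes for degree reasons while expressing $\xi - \xi'$ as $q_*$ of something exact-relative makes $[\xi-\xi']$ a coboundary. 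Concretely: $(\omega-\omega')|_{\partial C_2(M)}$ extends to the \emph{closed}-modulo-nothing form $\omega-\omega'$ on all of $C_2(M)$; restricting and applying the projection formula shows $\xi - \xi' = q_*(\omega-\omega')|_{\partial} = d(\text{something on }\Delta)$ because $\omega - \omega'$ bounds in $C_2(M)$, hence its boundary restriction is exact after fiber integration. The main obstacle I anticipate is exactly this last point — making the Stokes/fiber-integration argument rigorous requires care with the corner structure of $C_2(M)$ (it is a manifold with boundary, not corners, which helps) and with the fact that $\omega$ is only a $2$-form, not closed a priori, so one must track $d\omega$ terms through the push-forward; I expect the cleanest route is to phrase everything in terms of the long exact cohomology sequence of the pair $(C_2(M),\partial C_2(M))$ with coefficients in $F_\rho$, where the connecting map sends $[p^*\omega_{S^2}{\bf 1}]$ to a class whose image in $H^2(\Delta;E_\rho\otimes E_\rho)$ under the Thom/Gysin isomorphism of the $S^2$-bundle is precisely $[\xi]$, manifestly independent of choices.
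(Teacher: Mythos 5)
Your overall strategy (read off the obstruction in $H^3_-(C_2(M),\partial C_2(M);F_\rho)$ from the long exact sequence of the pair) is the same as the paper's, but the proposal omits the one computation that makes the proposition true, and the arguments you sketch in its place do not work. The existence of $\xi$ is not automatic: you must show that the class $\delta^*_{C_2(M)}[p^*\omega_{S^2}{\bf 1}]$ can be cancelled by a class of the form $\delta^*_{C_2(M)}[q^*\xi]$ with $\xi$ a \emph{closed} anti-invariant $2$-form on $\Delta$. The paper does this by combining the excision isomorphism $H^3_-(C_2(M),\partial C_2(M);F_\rho)\cong H^3_-(M^2,\Delta;E_\rho\boxtimes E_\rho)$ with the K\"unneth vanishing $H^2_-(M^2;E_\rho\boxtimes E_\rho)=H^3_-(M^2;E_\rho\boxtimes E_\rho)=0$ (a consequence of the acyclicity of $E_\rho$ on $M$), which makes the connecting map $\delta^*_{M^2}\colon H^2_-(\Delta;E_\rho\otimes E_\rho)\to H^3_-(M^2,\Delta;E_\rho\otimes E_\rho)$ an isomorphism; the composite $\Phi$ satisfies $\Phi(q^*[\xi])=[\xi]$, so one can solve $\Phi([p^*\omega_{S^2}{\bf 1}])=-[\xi]$ and conclude. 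You invoke acyclicity only ``on $\Delta\cong M$ for $E_\rho\otimes E_\rho$-type obstructions''---but $H^*_-(\Delta;E_\rho\otimes E_\rho)$ is exactly the group that need \emph{not} vanish (that is the raison d'\^etre of this paper); the vanishing you actually need lives on $M^2$ and comes from K\"unneth. Separately, your first route to $\xi$---take an arbitrary smooth extension $\widetilde\omega$ and claim $\widetilde\omega|_{\partial C_2(M)}-p^*\omega_{S^2}{\bf 1}$ is pulled back from $\Delta$---fails for a generic extension: a $2$-form on $\Delta\times S^2$ has mixed $(1,1)$ components that are neither $p^*$ nor $q^*$ of anything.

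The uniqueness argument is also broken as written. If $(\omega,\xi)$ and $(\omega',\xi')$ both satisfy (1)--(2), then $\omega-\omega'$ is closed with $(\omega-\omega')|_{\partial C_2(M)}=q^*(\xi-\xi')$, but $\omega-\omega'$ is not exact a priori, and fiber integration gives $q_*q^*(\xi-\xi')=0$, not $\xi-\xi'$; so the claim ``$\xi-\xi'=q_*\bigl((\omega-\omega')|_{\partial}\bigr)$ is exact'' is wrong on both counts. The correct deduction is that $\delta^*_{C_2(M)}[q^*(\xi-\xi')]=0$, whence $[\xi-\xi']=0$ by the injectivity of $q^*\circ\delta^*_{M^2}$---again excision plus $H^2_-(M^2;E_\rho\boxtimes E_\rho)=0$. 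Without that injectivity the uniqueness statement is simply false, so this input cannot be waved away.
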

This proposition is proved in Section~3. 

Now, we have the following 2-forms:
$$q^*\pi_1^*\xi\in \Omega^2(C_2(M);q^*(E_{\rho}^{\otimes 2}\boxtimes
\underline{\RR})),$$
$$q^*\pi_2^*\xi\in \Omega^2(C_2(M);q^*(\underline{\RR}\boxtimes E_{\rho}^{\otimes 2})).$$
Then we obtain closed 6-forms
$$\omega^3\in \Omega^6(C_2(M);F_{\rho}^{\otimes 3}),$$  
$$(q^*\pi_1^*\xi)(q^*\pi_2^*\xi)\omega\in \Omega^6(C_2(M);F_{\rho}^{\otimes 3}).$$  
Since $F_{\rho}^{\otimes 3}=q^*(E_{\rho}^{\otimes 3}\boxtimes E_{\rho}^{\otimes 3})$,
the natural transformation ${\rm Tr}:E_{\rho}^{\otimes 3}\to \underline{\RR}$ induces a natural transformation
$$\tr:F_{\rho}^{\otimes 3}\to (\underline{\RR}\boxtimes\underline{\RR}=)\underline{\RR}.$$
Therefore we get closed 6-forms
$$\tr\omega^3,\tr((q^*\pi_1^*\xi)(q^*\pi_2^*\xi)\omega)\in \Omega^6(C_2(M);\RR).$$
\begin{defini}
$$Z_{\Theta}(\omega)=\int_{C_2(M)}\tr\omega^3,
\Zdam(\omega,\xi)=\int_{C_2(M)}\tr((q^*\pi_1^*\xi)(q^*\pi_2^*\xi)\omega),$$
$$Z_1(M,\rho)=Z_{\Theta}(\omega)-3\Zdam(\omega,\xi).$$
\end{defini}
\begin{theorem}\label{theorem}
$Z_1(M,\rho)$ is an invariant of $M$, $\rho$
(independent of the choices of $\omega,\xi$).
Furthermore, $Z_1(M,\rho)$ is invariant under homotopy of the framing.
\end{theorem}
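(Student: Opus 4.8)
The plan is to follow the standard Bott--Cattaneo / Chern--Simons strategy: show that $Z_1(M,\rho)$ is unchanged under the two kinds of choices involved (the choice of the pair $(\omega,\xi)$ subject to Proposition~\ref{prop21}, with $M$, $\rho$, the framing and $\omega_{S^2}$ fixed) and then under deformation of the framing (and of the metric and of $\omega_{S^2}$, which one checks is harmless). For the independence of $(\omega,\xi)$, take two admissible pairs $(\omega_0,\xi_0)$ and $(\omega_1,\xi_1)$ and connect them: by Proposition~\ref{prop21} the classes $[\xi_0]=[\xi_1]$ in $H^2_-(\Delta;E_\rho\otimes E_\rho)$, so $\xi_1-\xi_0=d\eta$ for some $\eta\in\Omega^1_-(\Delta;E_\rho\otimes E_\rho)$, and $\omega_1-\omega_0$ restricts on $\partial C_2(M)$ to $q^*(\xi_1-\xi_0)=q^*d\eta$; hence after subtracting $d(q^*\eta)$-type correction terms one finds $\omega_1-\omega_0 = d\alpha$ with $\alpha\in\Omega^1_-(C_2(M);F_\rho)$ having controlled boundary behaviour. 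Then $Z_\Theta(\omega_1)-Z_\Theta(\omega_0)$ and $\Zdam(\omega_1,\xi_1)-\Zdam(\omega_0,\xi_0)$ are each computed by Stokes' theorem on $C_2(M)$, producing boundary integrals over $\partial C_2(M)=\Delta\times S^2$; the point is that the specific linear combination $Z_\Theta-3\Zdam$ is designed so that these boundary contributions cancel. Concretely, the $\tr$-symmetrization produces a factor of $3$ when one differentiates $\tr\omega^3$, matching the coefficient $3$ in front of $\Zdam$, and the remaining terms are integrals of forms pulled back from $S^2$ or from $\Delta$ against $\omega_{S^2}$ or $\xi$, which vanish for dimension/degree reasons or by the acyclicity $H^*(M;E_\rho)=0$ together with the anti-invariance under $T$ forcing certain $T_{S^2}$-anti-invariant forms on $S^2$ to integrate against each other trivially.

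Next, for invariance under homotopy of the framing, I would use the standard device of a one-parameter family: a homotopy of framings gives a framing of $M\times[0,1]$ restricting to the two given framings at the ends, hence a family of metrics and an associated fibration $\mathcal{C}_2\to[0,1]$ with fibre $C_2(M)$, together with forms $\widetilde\omega,\widetilde\xi$ on the total space restricting to admissible pairs on each fibre (existence of such an extension is again an application of the relative version of Proposition~\ref{prop21}, using that the relevant obstruction lives in a cohomology group that is controlled by acyclicity). Applying Stokes' theorem to $\int_{\mathcal{C}_2}\tr\widetilde\omega^3$ and to the corresponding $\Zdam$-integrand over the total space, the difference $Z_1$ at the two endpoints equals an integral over $\partial\mathcal{C}_2\setminus(\text{fibre ends})=S\nu_\Delta$-bundle over $[0,1]$ of a form built from $p^*\omega_{S^2}\mathbf{1}$ and $q^*\widetilde\xi$; by the same cancellation that made $Z_1$ well defined fibrewise, this boundary term vanishes. (The genuinely metric-dependent piece that would survive for $Z_\Theta$ alone — the ``anomaly'' involving $\int_{S^2}\omega_{S^2}\wedge(\text{something})$ — is exactly what the $-3\Zdam$ term is there to kill; this is the conceptual content of introducing $\Zdam$.)

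The main obstacle I anticipate is the boundary-cancellation bookkeeping: one must write $d(\tr\omega^3)=0$ and $d\big(\tr((q^*\pi_1^*\xi)(q^*\pi_2^*\xi)\omega)\big)=0$ on the interior (both hold because $\omega$ and $\xi$ are closed and $\tr$, being induced by the Jacobi-type tensor $\langle[\cdot,\cdot],\cdot\rangle$, is compatible with $d$), and then track carefully how $\omega|_{\partial C_2(M)}=p^*\omega_{S^2}\mathbf{1}+q^*\xi$ interacts with $q^*\pi_i^*\xi$ on the boundary, where $q:\partial C_2(M)\to\Delta$ collapses the $S^2$ factor so that $q^*\pi_1^*\xi=q^*\pi_2^*\xi=q^*\xi$ there. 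Expanding $(p^*\omega_{S^2}\mathbf 1+q^*\xi)^3$ and comparing with $(q^*\xi)(q^*\xi)(p^*\omega_{S^2}\mathbf 1+q^*\xi)$ after applying $\tr$, using $\int_{S^2}\omega_{S^2}=1$ and $\int_{S^2}\omega_{S^2}^2=0$ and the $T_{S^2}$-anti-invariance, is where the coefficient $3$ and the vanishing must be verified term by term; the sign conventions from the outward-normal-first boundary orientation and the ordering of product factors need to be handled with care, but no deep new idea is required beyond what is already set up in Sections~2--3.
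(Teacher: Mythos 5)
Your overall strategy (Stokes' theorem on $C_2(M)$ for the choice of $\omega$, and a cylinder argument on $[0,1]\times C_2(M)$ for the choice of $\xi$, $\omega_{S^2}$ and the framing) is the same as the paper's, and your description of the second half --- where the coefficient $3$ in $Z_\Theta-3\Zdam$ makes the boundary terms $\int\tr(3\,\widetilde p^*\widetilde{\omega_{S^2}}{\bf 1}\,\widetilde q^*\widetilde\xi^2)$ and $3\int\tr(\widetilde p^*\widetilde{\omega_{S^2}}{\bf 1}\,\widetilde q^*\widetilde\xi^2)$ cancel --- is essentially Proposition~\ref{step2}. But there is a genuine gap in your treatment of the first half, the variation of $\omega$ with the boundary datum $p^*\omega_{S^2}{\bf 1}+q^*\xi$ fixed. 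Writing $\omega-\omega'=d(q^*\eta)$ with $\eta$ pulled back from $M^2$ (this requires $H^2_-(M^2;E_\rho\boxtimes E_\rho)=0$ and excision, not just exactness on $C_2(M)$), Stokes' theorem leaves the single surviving boundary term $6\int_\Delta\tr(\overline{\eta}\,\xi\,{\bf 1})$ with $\overline{\eta}=\eta|_\Delta$. This term is \emph{not} cancelled against anything coming from $\Zdam$: the corresponding boundary term for $\Zdam$ is a $5$-form on the $3$-manifold $\Delta$ and vanishes trivially, so in this step $Z_\Theta$ and $\Zdam$ are each separately invariant and the linear combination plays no role. Nor does it vanish for degree reasons or by the $T$-anti-invariance: since $\overline\eta$ and $\xi$ both lie in the $(-1)$-eigenspace and ${\bf 1}$ in the $(+1)$-eigenspace, the symmetry argument gives $\tr(\overline{\eta}\xi{\bf 1})=+\tr(\overline{\eta}\xi{\bf 1})$, i.e.\ no information.

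The missing idea is the algebraic identity of Lemma~\ref{keylemma}: a computation with the Jacobi identity and the invariance of the Killing form shows
$\tr(\overline{\eta}\,\xi\,{\bf 1})=I\bigl(l(\overline{\eta})\,l(\xi)\bigr)$,
where $l$ is induced by the Lie bracket and $I$ by the inner product. Only after this conversion does acyclicity enter: $l(\xi)$ is a closed $E_\rho$-valued $2$-form on the $3$-manifold $\Delta\cong M$, hence exact because $H^2(M;E_\rho)=0$, and since $\overline\eta$ is closed the integral $\int_\Delta I(l(\overline{\eta})\,l(\xi))$ is the integral of an exact form. Your proposal invokes ``acyclicity together with anti-invariance'' in passing, but without the identity above there is no way to bring $H^*(M;E_\rho)=0$ to bear on $\int_\Delta\tr(\overline{\eta}\xi{\bf 1})$, whose integrand is valued in $(E_\rho\otimes E_\rho)^{\otimes 3}$ rather than in $E_\rho$. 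This Jacobi-identity step is the one genuinely new ingredient of the proof and the place where your argument, as written, does not close.
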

This theorem is proved in Section~4.
\begin{remark}
When we can take $\xi=0$, obviously $\Zdam(\omega,\xi)=0$ and then $Z_1(M,\rho)$ coincides with the $\Theta$-invariant $I_{(\Theta,{\rm tr},{\rm tr})}(M)$ of the framed 3-manifold $M$ given in Theorem 2.5 in \cite{BC2}.
\end{remark}

\section{Proof of Proposition\ref{prop21}}
In the following commutative diagram, 
the top horizontal line is a part of the long exact sequence of the pair $(C_2(M),\partial C_2(M))$ and the 
bottom line is that of $(M^2,\Delta)$.
Thanks to the excision theorem, the right vertical homomorphism $q^*$ is an isomorphism. 
$$\xymatrix{
 H^2_-(\partial C_2(M);q^*(E_{\rho}\otimes E_{\rho})) \ar[r]^(0.5){\delta^*_{C_2(M)}} \ar@{}[dr]|\circlearrowleft 
 &H^3_-(C_2(M),\partial C_2(M);F_{\rho})\\
 H^2_-(\Delta;E_{\rho}\otimes E_{\rho})\ar[u]^{(q|_{\partial C_2(M)})^*} \ar[r]^(0.45){\delta^*_{M^2}}_(0.45){\cong}& 
  H^3_-(M^2,\Delta;E_{\rho}\otimes E_{\rho})\ar[u]^{q^*}_{\cong}
}$$
Since $H^2_-(M^2;E_{\rho}\otimes E_{\rho})=H^3_-(M^2;E_{\rho}\otimes E_{\rho})=0$,
the connected homomorphism $\delta^*_{M^2}$ on the bottom line is an isomorphism.
Set 
$$ \Phi=(\delta^*_{M^2})^{-1}\circ (q^*)^{-1}\circ\delta^*_{C_2(M)}:
H^2_-(\partial C_2(M);q^*(E_{\rho}\otimes E_{\rho}))\to H^2_-(\Delta;E_{\rho}\otimes E_{\rho}).$$ 
We take a 2-form $\xi\in \Omega_-^2(\Delta;E_{\rho}\otimes E_{\rho})$ such that
$$\Phi([p^*\omega_{S^2}{\bf 1}])=-[\xi]\in H^2_-(\Delta;E_{\rho}\otimes E_{\rho}).$$ 
The above diagram implies that $\Phi(q^*[\xi])=[\xi]$.
Then $\Phi(p^*\omega_{S^2}{\bf 1}+q^*\xi)=0$.
Thus $\delta_{C_2(M)}^*(p^*\omega_{S^2}{\bf 1}+q^*\xi)=0$.
Therefore there exists a closed 2-form $\omega\in \Omega^2_-(C_2(M);F_{\rho})$ such that 
$$\omega|_{\partial C_2(M)}=p^*\omega_{S^2}{\bf 1}+q^*\xi.$$
The second assertion is a direct consequence of the definition $-[\xi]=\Phi([p^*\omega_{S^2}{\bf 1}])$.

\section{Proof of Theorem~\ref{theorem}}
The proof is reduced to the following two propositions:
\begin{prop}\label{step1}
Let $\omega,\omega'\in \Omega_-^2(C_2(M);F_{\rho})$ be closed 2-forms such that
$$\omega|_{\partial C_2(M)}=\omega'|_{\partial C_2(M)}=p^*\omega_{S^2}{\bf 1}+q^*\xi.$$
Then $Z_{\Theta}(\omega)=Z_{\Theta}(\omega')$ and $\Zdam(\omega,\xi)=\Zdam(\omega',\xi)$ hold.
\end{prop}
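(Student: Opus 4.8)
The plan is to show that the difference $\omega - \omega'$ is exact (as a form in $\Omega^2_-$ on $C_2(M)$, with values in $F_\rho$), and then to turn each of the two integrals into the integral of an exact 6-form, so that Stokes' theorem reduces everything to a boundary integral that vanishes by the eigenspace/parity conditions. First I would observe that $\eta := \omega - \omega'$ is a closed $2$-form in $\Omega^2_-(C_2(M);F_\rho)$ which restricts to $0$ on $\partial C_2(M)$. Hence $\eta$ defines a relative class in $H^2_-(C_2(M),\partial C_2(M);F_\rho)$. Using the same excision/long-exact-sequence comparison as in Section~3 (the square relating the sequences of $(C_2(M),\partial C_2(M))$ and $(M^2,\Delta)$, with $q^*$ an isomorphism on the right), together with $H^1_-(M^2;E_\rho\boxtimes E_\rho) = H^2_-(M^2;E_\rho\boxtimes E_\rho) = 0$, I get that this relative group vanishes, so $[\eta]=0$; concretely there is $\mu \in \Omega^1_-(C_2(M);F_\rho)$ with $d\mu = \eta$ and $\mu|_{\partial C_2(M)} = 0$. (Alternatively one can argue more hands-on: any closed form vanishing on the boundary and lying in a relative group that is zero is exact by a primitive supported away from the boundary collar; but the cohomological argument is cleanest.)

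Next I would handle $Z_\Theta$. Write $\omega^3 - \omega'^3 = d\mu\,(\omega^2 + \omega\omega' + \omega'^2)$, and note that since $\omega,\omega',\mu$ are all closed/appropriately behaved, $\mathrm{Tr}^{\boxtimes 2}(\omega^3 - \omega'^3)$ is the exterior derivative of $\mathrm{Tr}^{\boxtimes 2}(\mu\,(\omega^2+\omega\omega'+\omega'^2))$, a $5$-form on $C_2(M)$. By Stokes,
$$Z_\Theta(\omega) - Z_\Theta(\omega') = \int_{C_2(M)} d\,\mathrm{Tr}^{\boxtimes 2}\big(\mu(\omega^2+\omega\omega'+\omega'^2)\big) = \int_{\partial C_2(M)} \mathrm{Tr}^{\boxtimes 2}\big(\mu(\omega^2+\omega\omega'+\omega'^2)\big).$$
This boundary integral is zero because $\mu|_{\partial C_2(M)} = 0$. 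Exactly the same manipulation works for $\Zdam$: with $\xi$ fixed, $(q^*\pi_1^*\xi)(q^*\pi_2^*\xi)$ is a fixed closed $4$-form, so
$$\Zdam(\omega,\xi) - \Zdam(\omega',\xi) = \int_{C_2(M)} d\,\mathrm{Tr}^{\boxtimes 2}\big((q^*\pi_1^*\xi)(q^*\pi_2^*\xi)\mu\big) = \int_{\partial C_2(M)} \mathrm{Tr}^{\boxtimes 2}\big((q^*\pi_1^*\xi)(q^*\pi_2^*\xi)\mu\big) = 0,$$
again using $\mu|_{\partial C_2(M)} = 0$.

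I expect the main obstacle to be the exactness step, i.e.\ producing the primitive $\mu$ \emph{with vanishing restriction to the boundary} and in the correct $-1$ eigenspace $\Omega^1_-$. One must check that the relevant relative cohomology group $H^2_-(C_2(M),\partial C_2(M);F_\rho)$ really is zero: via excision it is $H^2_-(M^2,\Delta;E_\rho\boxtimes E_\rho)$, which the long exact sequence of $(M^2,\Delta)$ identifies using $H^1_-(\Delta;E_\rho\otimes E_\rho)$ (which is $H^1_-(M;E_\rho\otimes E_\rho)$) and the acyclicity-type vanishing $H^1_-(M^2;E_\rho\boxtimes E_\rho)=H^2_-(M^2;E_\rho\boxtimes E_\rho)=0$ already used in Section~3; here one should be a little careful that the Künneth decomposition of $H^*(M^2;E_\rho\boxtimes E_\rho)$ in terms of $H^*(M;E_\rho)$ together with acyclicity of $E_\rho$ gives the needed vanishing, and that the involution-equivariant ($\pm$) refinement is compatible with all of this. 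Once the equivariant primitive $\mu$ vanishing on $\partial C_2(M)$ is in hand, the rest is a routine Stokes computation, noting only that $\mathrm{Tr}^{\boxtimes 2}$ is a morphism of local systems so it commutes with $d$ and the products of forms make sense with the stated coefficients.
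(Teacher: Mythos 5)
There is a genuine gap at the exactness step, and it is precisely the point this paper is built to avoid. You claim that $H^2_-(C_2(M),\partial C_2(M);F_\rho)=0$, so that $\omega-\omega'$ admits a primitive $\mu\in\Omega^1_-$ with $\mu|_{\partial C_2(M)}=0$. But excision and the long exact sequence of $(M^2,\Delta)$, together with $H^1_-(M^2;E_\rho\boxtimes E_\rho)=H^2_-(M^2;E_\rho\boxtimes E_\rho)=0$, give an \emph{isomorphism}
$$H^2_-(C_2(M),\partial C_2(M);F_\rho)\cong H^2_-(M^2,\Delta;E_\rho\boxtimes E_\rho)\cong H^1_-(\Delta;E_\rho\otimes E_\rho),$$
not a vanishing. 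Since $T_0|_\Delta=\mathrm{id}$, this last group is $H^1(M;(E_\rho\otimes E_\rho)^-)=H^1(M;\Lambda^2 E_\rho)$, and acyclicity of $E_\rho$ does not imply acyclicity of $\Lambda^2 E_\rho$. This is exactly the group $H^*_-(\Delta;E_\rho\otimes E_\rho)$ whose claimed vanishing (Lemma~1.2 of \cite{BC2}) the introduction says has gaps; if it did vanish one could take $\xi=0$ and the correction term $\Zdam$ would be unnecessary. So you cannot in general produce a primitive vanishing on the boundary, and your boundary integrals do not vanish for the reason you give. (Your fallback remark that one should check the $H^1_-(\Delta;\cdot)$ contribution is the right worry, but it cannot be dispatched by acyclicity of $E_\rho$.)

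The paper's proof takes a genuinely different route that you would need to replicate. It produces a primitive of the form $q^*\eta$ with $\eta\in\Omega^1_-(M^2;E_\rho\boxtimes E_\rho)$, using only $H^2_-(M^2;E_\rho\boxtimes E_\rho)=0$; this primitive does \emph{not} vanish on $\partial C_2(M)$. Stokes then leaves a nontrivial boundary term, which after expanding $(p^*\omega_{S^2}{\bf 1}+q^*\xi)^2$ and integrating over the $S^2$ fiber reduces to $6\int_\Delta\tr(\overline\eta\,\xi\,{\bf 1})$ with $\overline\eta=\eta|_\Delta$. Killing this term is the real content of the proposition: one needs the algebraic identity $\tr(\overline\eta\,\xi\,{\bf 1})=I(l(\overline\eta)l(\xi))$ (a Jacobi-identity computation on an orthonormal basis of $\mathfrak g$), after which acyclicity of $E_\rho$ itself gives $l(\xi)=d\zeta$ and the integral vanishes by Stokes on $\Delta$. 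The $\Zdam$ difference is easier, as the boundary term is the integral over $\partial C_2(M)$ of a form pulled back from the $3$-dimensional $\Delta$ of degree $5$, hence zero; that part of your computation survives once the correct primitive is used.
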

\begin{prop}\label{step2}
Let $\omega_{S^2,0},\omega_{S^2,1}\in \Omega^2(S^2;\RR)$ be closed 2-forms satsfying $\int_{S^2}\omega_{S^2,0}=\int_{S^2}\omega_{S^2,1}=1$,
$T_{S^2}^*\omega_{S^2,0}=-\omega_{S^2,0}$ and
$T_{S^2}^*\omega_{S^2,1}=-\omega_{S^2,1}$.
Let $\{p_t:\Delta\times S^2\to S^2\}_{t\in[0,1]}$ be a homotopy such that $p_0=p$ and
$p_t\circ T|_{\partial C_2(M)}=T_{S^2}\circ p_t$ for $t=0,1$.
Let $\omega_0,\omega_1\in \Omega_-^2(C_2(M);F_{\rho})$ and 
$\xi_0,\xi_1\in \Omega^2_-(\Delta;E_{\rho}\otimes E_{\rho})$ be closed 2-forms satisfying
$$\omega_0|_{\partial C_2(M)}=p_0^*\omega_{S^2,0}{\bf 1}+q^*\xi_0,
\omega_1|_{\partial C_2(M)}=p_1^*\omega_{S^2,1}{\bf 1}+q^*\xi_1.$$
Then $Z_{\Theta}(\omega_0)-3\Zdam(\omega_0,\xi_0)
=Z_{\Theta}(\omega_1)-3\Zdam(\omega_1,\xi_1)$ holds.
\end{prop}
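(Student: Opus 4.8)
The plan is a one-parameter Stokes argument on the cylinder $W=C_2(M)\times[0,1]$. Write $\hat q=q\times\mathrm{id}$, $\hat\pi_i=\pi_i\times\mathrm{id}$. First I would assemble interpolating data. Since $[\omega_{S^2,0}]=[\omega_{S^2,1}]$ in $H^2(S^2;\RR)$, averaging a primitive of $\omega_{S^2,1}-\omega_{S^2,0}$ over $T_{S^2}$ yields a closed $\Omega\in\Omega^2(S^2\times[0,1];\RR)$ with $(T_{S^2}\times\mathrm{id})^*\Omega=-\Omega$ and $\Omega|_{S^2\times\{i\}}=\omega_{S^2,i}$. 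Because $p_0\simeq p_1$ through $\{p_t\}$ and $\omega_{S^2,0}\simeq\omega_{S^2,1}$, the classes $[p_i^*\omega_{S^2,i}{\bf 1}]\in H^2_-(\partial C_2(M);q^*(E_{\rho}\otimes E_{\rho}))$ coincide, so by the description of $[\xi]$ in Section~3 (the homomorphism $\Phi$ there uses only $C_2(M)$ and $M^2$, not the framing) one gets $[\xi_0]=[\xi_1]$ in $H^2_-(\Delta;E_{\rho}\otimes E_{\rho})$; averaging a primitive over $T_0|_\Delta$ then produces a closed anti-invariant $\tilde\xi\in\Omega^2(\Delta\times[0,1];E_{\rho}\otimes E_{\rho})$ with $\tilde\xi|_{t=i}=\xi_i$. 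With $\tilde p(x,t)=(p_t(x),t)$, I would choose a closed $\tilde\omega\in\Omega^2_-(W;\hat q^*(E_{\rho}\boxtimes E_{\rho}))$ with $\tilde\omega|_{t=0}=\omega_0$ whose restriction to $\partial C_2(M)\times[0,1]$ equals the anti-invariant part $\tfrac12(\eta-(T\times\mathrm{id})^*\eta)$ of $\eta:=\tilde p^*\Omega\,{\bf 1}+\hat q^*\tilde\xi$. Such $\tilde\omega$ exists because this prescribed closed form lives on the subcomplex $\partial C_2(M)\times[0,1]\cup C_2(M)\times\{0\}$, a deformation retract of $W$, and the obstruction to extending it as a closed form lies in the same relative cohomology group that is killed in the proof of Proposition~\ref{prop21}. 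Set $\omega_1':=\tilde\omega|_{t=1}$; since $p_1$ is equivariant and $\xi_1$ anti-invariant, the symmetrization is trivial at $t=1$, so $\omega_1'|_{\partial C_2(M)}=p_1^*\omega_{S^2,1}{\bf 1}+q^*\xi_1$ and $\omega_1'$ is an admissible propagator for the time-$1$ data.

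Next, $\tr\tilde\omega^3$ and $\tr(\widetilde{q^*\pi_1^*\xi}\cdot\widetilde{q^*\pi_2^*\xi}\cdot\tilde\omega)$, where $\widetilde{q^*\pi_i^*\xi}:=\hat q^*\hat\pi_i^*\tilde\xi$ are closed, are closed $6$-forms on the $7$-manifold $W$, so Stokes' theorem gives, with one common sign,
\[
Z_{\Theta}(\omega_1')-Z_{\Theta}(\omega_0)=\pm\int_{\partial C_2(M)\times[0,1]}\tr\tilde\omega^3,
\]
\[
\Zdam(\omega_1',\xi_1)-\Zdam(\omega_0,\xi_0)=\pm\int_{\partial C_2(M)\times[0,1]}\tr\bigl(\widetilde{q^*\pi_1^*\xi}\cdot\widetilde{q^*\pi_2^*\xi}\cdot\tilde\omega\bigr).
\]
By Proposition~\ref{step1}, $Z_{\Theta}(\omega_1')=Z_{\Theta}(\omega_1)$ and $\Zdam(\omega_1',\xi_1)=\Zdam(\omega_1,\xi_1)$, so the proposition reduces to showing that the two boundary integrals are in the ratio $3:1$.

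To prove this I would restrict everything to $\partial C_2(M)\times[0,1]$ and use that $\tilde p^*\Omega\,{\bf 1}$ is pulled back from the $3$-manifold $S^2\times[0,1]$ (hence squares to zero) while the closed form $\hat q^*\tilde\xi$ is pulled back from the $4$-manifold $\Delta\times[0,1]$ (hence has vanishing cube). Expanding $\tilde\omega^3$ on the boundary, all terms die by these degree reasons except the one linear in $\tilde p^*\Omega\,{\bf 1}$ and quadratic in $\hat q^*\tilde\xi$, together with a term of shape $(\hat q^*\tilde\xi)(\tilde p^*\Omega\,{\bf 1})((T\times\mathrm{id})^*\tilde p^*\Omega\,{\bf 1})$ that vanishes after applying $\tr$ because $\tr$ contracts the antisymmetric Lie part of $\xi$ against the symmetric tensor carried by the two copies of ${\bf 1}$. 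Likewise only the $\tilde p^*\Omega\,{\bf 1}$-part of $\tilde\omega$ contributes to $\widetilde{q^*\pi_1^*\xi}\cdot\widetilde{q^*\pi_2^*\xi}\cdot\tilde\omega$. Passing to a local orthonormal frame of $E_\rho$ and writing $\tilde\xi=\sum_\alpha f_\alpha\otimes(a_\alpha\wedge b_\alpha)$, both surviving integrands become an explicit differential form weighted, respectively, by $\langle[a_\alpha,a_\beta],[b_\alpha,b_\beta]\rangle+\langle[a_\alpha,b_\beta],[a_\beta,b_\alpha]\rangle$ and by $\langle[a_\alpha,b_\alpha],[a_\beta,b_\beta]\rangle$. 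The invariant Jacobi identity $\langle[x_1,x_2],[x_3,x_4]\rangle-\langle[x_1,x_3],[x_2,x_4]\rangle+\langle[x_1,x_4],[x_2,x_3]\rangle=0$ (with $(x_1,x_2,x_3,x_4)=(a_\alpha,b_\alpha,a_\beta,b_\beta)$) together with the total antisymmetry of $x\otimes y\otimes z\mapsto\langle[x,y],z\rangle$ then converts the first weight into the numerical multiple of the second for which $\int_{\partial C_2(M)\times[0,1]}\tr\tilde\omega^3=3\int_{\partial C_2(M)\times[0,1]}\tr(\widetilde{q^*\pi_1^*\xi}\cdot\widetilde{q^*\pi_2^*\xi}\cdot\tilde\omega)$. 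With the displayed formulas and Proposition~\ref{step1} this gives $Z_{\Theta}(\omega_0)-3\Zdam(\omega_0,\xi_0)=Z_{\Theta}(\omega_1)-3\Zdam(\omega_1,\xi_1)$.

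The crux, and the step most likely to require care, is this last computation: keeping track of the Koszul signs in the wedge products and of the exact combinatorics of the ${\rm Tr}^{\boxtimes 2}$ contractions so that the Jacobi identity reproduces precisely the coefficient $3$ built into the definition of $Z_1$. Two further points deserve attention: the existence of the closed anti-invariant extension $\tilde\omega$ over $W$ (which relies on the same relative-cohomology vanishing as Proposition~\ref{prop21}), and the fact that $\{p_t\}$ is assumed equivariant only at $t=0,1$. The latter is exactly why one antisymmetrizes $\eta$ on the boundary: the extra piece $\tfrac12\bigl(\tilde p^*\Omega\,{\bf 1}-(T\times\mathrm{id})^*\tilde p^*\Omega\,{\bf 1}\bigr)$ it introduces is still a difference of pull-backs of $2$-forms from $3$-manifolds, so it still squares to zero and the degree count above is unaffected.
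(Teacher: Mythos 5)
Your overall strategy---interpolate $\omega_{S^2}$, $\xi$ and the propagator over $[0,1]\times C_2(M)$, apply Stokes' theorem to the closed $6$-forms $\tr\widetilde{\omega}^3$ and $\tr\bigl((\widetilde{q}^*\widetilde{\pi}_1^*\widetilde{\xi})(\widetilde{q}^*\widetilde{\pi}_2^*\widetilde{\xi})\widetilde{\omega}\bigr)$, and kill all boundary contributions except those linear in $\widetilde{p}^*\widetilde{\omega_{S^2}}{\bf 1}$ and quadratic in $\widetilde{q}^*\widetilde{\xi}$ because these factors are pulled back from the $3$-manifold $[0,1]\times S^2$ and the $4$-manifold $[0,1]\times\Delta$ respectively---is exactly the paper's proof. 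Your additional care (antisymmetrizing the boundary data because $p_t$ is equivariant only at $t=0,1$, and disposing of the resulting cross term by the symmetry argument pitting ${\bf 1}^{\otimes 2}$ against the anti-invariant $\xi$) is a legitimate refinement of a point the paper passes over.

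The gap is at the step you yourself flag as the crux. After the degree count, the two surviving boundary integrands are contractions of $\widetilde{p}^*\widetilde{\omega_{S^2}}\,(\widetilde{\xi}\wedge\widetilde{\xi})\otimes{\bf 1}$ with \emph{different} pairings: in $\tr\widetilde{\omega}^3$ each copy of $\widetilde{\xi}$ arrives as a boundary value of $\widetilde{\omega}$, i.e.\ as a section of $q^*(E_{\rho}\boxtimes E_{\rho})$, and so feeds one tensor leg into each ${\rm Tr}$, whereas in the $\Zdam$ integrand $\widetilde{\pi}_1^*\widetilde{\xi}$ puts both legs into the left ${\rm Tr}$ and $\widetilde{\pi}_2^*\widetilde{\xi}$ both into the right. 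The proposition is therefore equivalent to the precise quantitative relation between these two contractions of $\widetilde{\xi}\otimes\widetilde{\xi}\otimes{\bf 1}$, since only then does the binomial coefficient in $(A+B)^3$ account for the factor $3$ built into $Z_1$. You correctly sense that a Jacobi identity relates the two pairings, but your conclusion---that it converts the first weight into ``the numerical multiple of the second for which'' the desired $3:1$ ratio holds---assumes exactly what must be proved. Moreover your two displayed weights are written with incompatible normalizations (the interleaved expansion of $(a_\alpha\wedge b_\alpha)\otimes(a_\beta\wedge b_\beta)\otimes{\bf 1}$ produces \emph{twice} your first weight, while the grouped contraction produces \emph{four times} your second), so even granting $\langle[a_\alpha,a_\beta],[b_\alpha,b_\beta]\rangle+\langle[a_\alpha,b_\beta],[a_\beta,b_\alpha]\rangle=\langle[a_\alpha,b_\alpha],[a_\beta,b_\beta]\rangle$ from the Jacobi identity, the overall constant is not determined by what you wrote. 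This is precisely the computation of Lemma~\ref{keylemma}, and it must be carried out with all constants tracked; the paper instead closes the step by writing both integrands as the single expression $\tr(\widetilde{p}^*\widetilde{\omega_{S^2}}{\bf 1}\,\widetilde{q}^*\widetilde{\xi}^2)$ and reading the factor $3$ off the binomial expansion alone. To complete your proof you must either justify that identification of the two contraction patterns, constant included, or do the Lemma~\ref{keylemma}-type computation explicitly and confirm that the resulting ratio is the coefficient $3$ appearing in the definition of $Z_1$.
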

\subsection{Proof of Proposition\ref{step1}}
\begin{lemma}\label{43}
There exists a 1-form $\eta\in \Omega_-^1(M^2;E_{\rho}\boxtimes E_{\rho})$ such that
$\omega-\omega'=d(q^*\eta)$.
 \end{lemma}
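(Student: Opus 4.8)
The plan is to prove Lemma~\ref{43} by exploiting the fact that $\omega-\omega'$ is a closed $2$-form on $C_2(M)$ that is \emph{exact} (since it restricts to $0$ on the boundary) and that it descends to the base $M^2$. First I would observe that $\omega-\omega'$ is closed, lies in $\Omega^2_-(C_2(M);F_\rho)$, and vanishes identically on $\partial C_2(M)$; hence it defines a relative cohomology class in $H^2_-(C_2(M),\partial C_2(M);F_\rho)$. By the excision isomorphism already used in Section~3 and the identification $q^*\colon H^*_-(M^2,\Delta;E_\rho\boxtimes E_\rho)\xrightarrow{\ \cong\ } H^*_-(C_2(M),\partial C_2(M);F_\rho)$, this class is pulled back from $H^2_-(M^2,\Delta;E_\rho\boxtimes E_\rho)$. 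The long exact sequence of the pair $(M^2,\Delta)$ together with the vanishing $H^1_-(\Delta;E_\rho\otimes E_\rho)=0$ (a consequence of acyclicity of $E_\rho$, i.e. $H^*(M;E_\rho)=0$, since $\Delta\cong M$ and $E_\rho\otimes E_\rho$ restricted to the diagonal decomposes appropriately) and $H^2_-(M^2;E_\rho\boxtimes E_\rho)=0$ forces $H^2_-(M^2,\Delta;E_\rho\boxtimes E_\rho)=H^2_-(M^2;E_\rho\boxtimes E_\rho)=0$. Therefore the relative class of $\omega-\omega'$ vanishes, which is exactly the statement that $\omega-\omega'=d\alpha$ for some $\alpha\in\Omega^1_-(C_2(M);F_\rho)$ vanishing on $\partial C_2(M)$.

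The next step is to upgrade ``$\omega-\omega'=d\alpha$ with $\alpha|_{\partial C_2(M)}=0$'' to ``$\omega-\omega'=d(q^*\eta)$ for $\eta\in\Omega^1_-(M^2;E_\rho\boxtimes E_\rho)$''. Here I would use that a primitive supported away from (or vanishing to appropriate order at) the boundary sphere bundle pushes forward under $q$: since $\alpha$ restricts to zero on $\partial C_2(M)$ and $q$ collapses $\partial C_2(M)=S\nu_\Delta$ to $\Delta$, one can arrange $\alpha$ to be the pullback $q^*\eta$ of a form on $M^2$. Concretely, working in a collar of $\partial C_2(M)$ and using a Thom–Gysin / fiber-integration argument, a closed form representing a class pulled back from $M^2$ has a primitive that is itself a pullback; the standard way to see this is that the map $q^*\colon\Omega^*(M^2;E_\rho\boxtimes E_\rho)\to\Omega^*(C_2(M);F_\rho)$ induces an isomorphism on cohomology (the blow-up along a submanifold does not change de Rham cohomology with these coefficients because the exceptional sphere bundle has fiber $S^2$, which only contributes in degrees $0$ and $2$, and the relevant Gysin terms are killed by the eigenspace decomposition under $T$). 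So the class of $\omega-\omega'$ being zero in $H^2(C_2(M);F_\rho)$ already means $\omega-\omega'=d\alpha$; then one improves $\alpha$ to a pullback by subtracting an exact correction coming from a primitive on the collar.

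I would then verify the two decorations: the $T$-antiinvariance and the coefficient system. Since $\omega,\omega'\in\Omega^2_-$ and $T$ is an involution, averaging $\eta$ against $-T^*_0$ (replacing $\eta$ by $\tfrac12(\eta-T^*_0\eta)$) produces a representative in $\Omega^1_-(M^2;E_\rho\boxtimes E_\rho)$ without changing $d(q^*\eta)=\omega-\omega'$, because $\omega-\omega'$ is already $(-1)$-eigen. The coefficient bundle is automatic since $F_\rho=q^*(E_\rho\boxtimes E_\rho)$ by definition. The main obstacle I anticipate is the second step — rigorously producing the primitive in pullback form rather than merely as some $1$-form on $C_2(M)$; this requires a careful collar argument near $\partial C_2(M)$ (or an explicit comparison of the de Rham complexes of $C_2(M)$ and $M^2$ via fiber integration along the $S^2$-bundle), and one must check that the manipulation respects the local system $E_\rho\boxtimes E_\rho$ and the $T$-action simultaneously. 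Everything else is bookkeeping with long exact sequences and the acyclicity hypothesis that was already invoked in Section~3.
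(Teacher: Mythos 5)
There is a genuine gap, and it sits at the heart of what this paper is about. Your first step deduces $H^2_-(M^2,\Delta;E_\rho\boxtimes E_\rho)=0$ from the claimed vanishing $H^1_-(\Delta;E_\rho\otimes E_\rho)=0$, which you assert is ``a consequence of acyclicity of $E_\rho$.'' It is not. Since $T_0|_\Delta=\mathrm{id}$, the group $H^1_-(\Delta;E_\rho\otimes E_\rho)$ is $H^1(M;(E_\rho\otimes E_\rho)_-)$, i.e.\ cohomology with coefficients in $\wedge^2\mathfrak g$ acted on via $\wedge^2(\mathrm{ad}\circ\rho)$, and acyclicity of $\mathrm{ad}\circ\rho$ does not imply acyclicity of its exterior square. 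This is precisely the vanishing statement (Lemma~1.2 of \cite{BC2}) that the introduction flags as having gaps in its proof; the entire point of introducing $\xi$ and the correction term $\Zdam$ is to \emph{avoid} assuming it. If your vanishing held, one could take $\xi=0$ throughout and the original $\Theta$-invariant would already be well defined. Your second step has a further problem: the claim that $q^*\colon H^*_-(M^2;E_\rho\boxtimes E_\rho)\to H^*_-(C_2(M);F_\rho)$ is an isomorphism because ``the blow-up does not change de Rham cohomology'' is false. $C_2(M)$ is homotopy equivalent to $M^2\setminus\Delta$, not to $M^2$, and the Gysin sequence of the exceptional $S^2$-bundle contributes a term coming from $H^0(\Delta;(E_\rho\otimes E_\rho)_-)$ in degree $2$; killing that contribution in the $(-1)$-eigenspace is again exactly the delicate point of \cite{BC2}.

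The paper's own proof needs neither statement, which is why it is so short. One regards $\omega-\omega'$ as a relative cocycle for $(C_2(M),\partial C_2(M))$ (it vanishes on the boundary), transports it through the excision isomorphism $q^*\colon H^2_-(M^2,\Delta;E_\rho\boxtimes E_\rho)\cong H^2_-(C_2(M),\partial C_2(M);F_\rho)$ to a closed form on $M^2$, and then uses only the absolute vanishing $H^2_-(M^2;E_\rho\boxtimes E_\rho)=0$ — which genuinely does follow from $H^*(M;E_\rho)=0$ by K\"unneth — to produce a primitive $\eta\in\Omega^1_-(M^2;E_\rho\boxtimes E_\rho)$; pulling back gives $d(q^*\eta)=\omega-\omega'$. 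Note that the lemma never asks the relative class of $\omega-\omega'$ to vanish, and indeed $q^*\eta$ need not vanish on $\partial C_2(M)$ (its boundary restriction is $q^*(\eta|_\Delta)$). By aiming for a primitive that vanishes on the boundary you have made the problem strictly harder than necessary, and the extra strength you need is exactly the unavailable vanishing. The $T$-averaging remark at the end of your proposal is fine, but it does not repair the two steps above.
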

\begin{proof}
In the following diagram, 
the top horizontal line is a part of the long exact sequence of the pair 
$(C_2(M),\partial C_2(M))$ and the 
bottom line is that of $(M^2,\Delta)$.
The left vertical homomorphism $q^*$ is an isomorphism because of the excision theorem. 
$$\xymatrix{
 H^2_-(C_2(M),\partial C_2(M);F_{\rho}) \ar[r] \ar@{}[dr]|\circlearrowleft 
 &H^2_-(C_2(M);F_{\rho})\\
 H^2_-(M^2,\Delta;E_{\rho}\boxtimes E_{\rho})\ar[u]^{q^*}_{\cong} \ar[r]& 
  H^2_-(M^2;E_{\rho}\boxtimes E_{\rho})\ar[u]^{q^*}
}$$
The closed 2-form $\omega-\omega'$ gives a cohomology class in $H^2_-(C_2(M),\partial C_2(M);F_{\rho})$ and
then $((q^*)^{-1}(\omega-\omega'))|_{M^2}$ gives a cohomology class in $H^2_-(M^2;E_{\rho}\boxtimes E_{\rho})$.
Since $H^2_-(M^2;E_{\rho}\boxtimes E_{\rho})=0$,
there exists a 1-form $\eta\in \Omega_-^1(M^2;E_{\rho}\boxtimes E_{\rho})$ such that
$$d\eta=((q^*)^{-1}(\omega-\omega'))|_{M^2}.$$  
Thus we have $d(q^*\eta)=\omega-\omega'$.
\end{proof}
Thanks to Lemma~\ref{43} and Stokes's theorem,
\begin{eqnarray*}
Z_{\Theta}(\omega)-Z_{\Theta}(\omega')
&=& \int_{C_2(M)}\tr\left((\omega-\omega')(\omega^2+\omega\omega'+\omega'^2)\right)\\
&=&
 \int_{C_2(M)}\tr\left(d(q^*\eta)(\omega^2+\omega\omega'+\omega'^2)\right)\\
&=&
 \int_{\partial C_2(M)}\tr\left((q^*\eta)|_{\partial C_2(M)}(\omega^2+\omega\omega'+\omega'^2)|_{\partial C_2(M)}\right)\\
&=& 3\int_{\partial C_2(M)}\tr\left( (q^*\eta)|_{\partial C_2(M)}(p^*\omega_{S^2}{\bf 1}+q^*\xi)^2 \right)\\
&=& 6\int_{\Delta\times S^2}\tr\left( q^*(\eta|_{\Delta})p^*\omega_{S^2}{\bf 1}q^*\xi \right)\\
&=& 6\int_{\Delta}\tr\left(\eta|_{\Delta}\xi{\bf 1}\right).
\end{eqnarray*}
To simplify the notation, we set $\overline{\eta}=\eta|_{\Delta}$.

Let $l:E_{\rho} \otimes E_{\rho}\to E_{\rho}$
be a natural transformation induced from
the Lie bracket $[,]:\mathfrak g\otimes\mathfrak g\to \mathfrak g$.
We have $l(\overline{\eta})\in \Omega^1(\Delta;E_{\rho})$,
$l(\xi)\in \Omega^2(\Delta;E_{\rho})$.
Let $I:E_{\rho}\otimes E_{\rho}\to \underline{\RR}$ be a natural transformation induced from the
inner product of $\mathfrak g$.
Then $I(l(\overline{\eta})l(\xi))$ is a 2-form in $\Omega^2(\Delta;\RR)$.
\begin{lemma}\label{keylemma}
$\tr(\overline{\eta}\xi{\bf 1})=I(l(\overline{\eta})l(\xi))$.
\end{lemma}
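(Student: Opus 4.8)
The plan is to reduce the claimed identity $\tr(\overline{\eta}\,\xi\,{\bf 1}) = I(l(\overline{\eta})\,l(\xi))$ to a purely Lie-algebraic statement about the invariant tensors involved, since both sides are built by applying natural transformations (which on the level of fibres are $\pi_1(M)$-equivariant linear maps) to the differential form $\overline{\eta}\otimes\xi$. The differential form part is inert: $\overline{\eta}$ is a $1$-form and $\xi$ a $2$-form on $\Delta$, so $\overline{\eta}\,\xi$ is a $3$-form valued in $E_\rho^{\otimes 4}$, and we just need the two ways of contracting the four $\mathfrak g$-factors down to $\RR$ to agree. Concretely, first I would write $\overline{\eta} = \sum_a \eta^a\otimes e_a$ and $\xi = \sum_{b,c}\xi^{bc}\otimes e_b\otimes e_c$ in terms of the orthonormal basis $e_1,\dots,e_{\dim\mathfrak g}$, where $\eta^a\in\Omega^1(\Delta;\RR)$ and $\xi^{bc}\in\Omega^2(\Delta;\RR)$; the sign/ordering conventions for how form-degrees interleave with the tensor factors must be fixed once and then used consistently on both sides.

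Next I would expand each side. On the left, $\overline{\eta}\,\xi\,{\bf 1}$ lives in $F_\rho^{\otimes 3} = E_\rho^{\otimes 3}\boxtimes E_\rho^{\otimes 3}$ restricted to $\Delta$, i.e. in $E_\rho^{\otimes 6}$ over $\Delta$; writing ${\bf 1} = \sum_d e_d\otimes e_d$ and then applying $\tr = \mathrm{Tr}^{\boxtimes 2}$, which on $E_\rho^{\otimes 3}$ sends $x\otimes y\otimes z\mapsto \langle[x,y],z\rangle$, one gets a sum of form-coefficients times structure constants $\langle[e_a,e_b],e_d\rangle\cdot\langle[e_c,e_d],\,\cdot\,\rangle$-type expressions contracted over the repeated index $d$. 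On the right, $l(\overline{\eta}) = \sum_{a,b}\eta^a\,[e_a,\,\cdot\,]$ type expression — more precisely $l(\overline{\eta})$ pairs the two factors of $\overline{\eta}$, but $\overline{\eta}$ has only one $\mathfrak g$-factor; here I need to be careful that $l$ acts on $E_\rho\otimes E_\rho$, so it is really $l$ applied after noting $\overline{\eta}=\eta|_\Delta$ sits in $E_\rho\boxtimes E_\rho|_\Delta = E_\rho\otimes E_\rho$. So $l(\overline{\eta}) = \sum \eta^{a}\,[e_{a_1},e_{a_2}]$ where I must restore that $\overline{\eta}$ itself is $E_\rho^{\otimes 2}$-valued — this is exactly the point where the bookkeeping matters. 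Then $l(\xi)\in\Omega^2(\Delta;E_\rho)$ similarly, and $I(l(\overline{\eta})\,l(\xi)) = \sum \langle [e_{a_1},e_{a_2}],\,[e_{b_1},e_{b_2}]\rangle\,\eta^{\cdots}\xi^{\cdots}$.

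The heart of the matter is then the Lie-algebraic identity
\[
\sum_{d}\langle [x,y],e_d\rangle\,\langle e_d, [z,w]\rangle = \langle [x,y],[z,w]\rangle,
\]
which is just completeness of the orthonormal basis (the resolution of the identity $\sum_d e_d\langle e_d,\cdot\rangle = \mathrm{id}$), combined with the invariance/cyclicity of the Killing form $\langle[x,y],z\rangle = \langle x,[y,z]\rangle$ to match up which pair of $e$'s gets bracketed on each side. After substituting, both sides become the same sum of form-products $\eta^{\alpha}\xi^{\beta\gamma}$ weighted by the same scalar $\langle[e_\alpha,e_\beta],[\,\cdot\,,e_\gamma]\rangle$-style contraction, so they coincide. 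I expect the main obstacle to be purely one of signs and index placement: keeping track of the Koszul sign when moving the degree-$1$ form $\overline\eta$ past the degree-$2$ form $\xi$ and past the $0$-form ${\bf 1}$, and making sure the pairing $I$ on the right contracts the \emph{same} two $\mathfrak g$-slots that $\tr$ contracts on the left (as opposed to a transposed pairing, which would differ by a sign coming from antisymmetry of the bracket). Once the conventions from the Orientation/tensor-ordering setup are pinned down, the identity is forced by the single displayed completeness-plus-invariance computation, so the write-up is short: fix notation, expand both sides, invoke $\sum_d e_d\otimes e_d$ being the Casimir-type element dual to the Killing form, and conclude.
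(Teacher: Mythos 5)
Your overall strategy---reduce to a fibrewise statement about invariant tensors on $\mathfrak g^{\otimes 2}\otimes\mathfrak g^{\otimes 2}$, expand in an orthonormal basis, and contract the Casimir element $\bf 1$ by completeness---is the same as the paper's. But the step you call ``the heart of the matter'' is not the actual content of the lemma, and as written your argument fails at the final ``both sides become the same sum'' claim. After contracting $\sum_d e_d\otimes e_d$, the left-hand side pairs the two $\mathfrak g$-slots of $\overline\eta$ with the two slots of $\xi$ \emph{crosswise}: on decomposables it is $(x\otimes y)\otimes(z\otimes w)\mapsto\langle[x,z],[y,w]\rangle$ (because $\tr$ applies one copy of ${\rm Tr}$ to the first tensor factors and one to the second). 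The right-hand side pairs $\overline\eta$'s slots \emph{with each other} and $\xi$'s slots with each other: $(x\otimes y)\otimes(z\otimes w)\mapsto\langle[x,y],[z,w]\rangle$. These are genuinely different invariant contractions and are \emph{not} equal on all of $\mathfrak g^{\otimes 2}\otimes\mathfrak g^{\otimes 2}$: for $\mathfrak g=\mathfrak{su}(2)$, $x=y=e_1$, $z=w=e_2$, the first gives $\langle[e_1,e_2],[e_1,e_2]\rangle\neq 0$ while the second gives $0$. So completeness of the basis plus cyclicity of the Killing form cannot by themselves ``match up which pair gets bracketed''; no amount of sign bookkeeping will rescue the identity in the generality in which you set it up.

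The two missing ingredients are: (i) the hypothesis that $\overline\eta$ and $\xi$ lie in $\Omega^*_-$, which---since $T_0|_\Delta={\rm id}$---means their values lie in the antisymmetric part $(\mathfrak g\otimes\mathfrak g)^-$, so it suffices to check the identity on the basis $e_i\otimes e_j-e_j\otimes e_i$; and (ii) the Jacobi identity. Indeed the identity that actually does the work is
\[
\langle[x,z],[y,w]\rangle-\langle[x,w],[y,z]\rangle=\langle[x,y],[z,w]\rangle,
\]
proved by rewriting each term as $\langle x,[\cdot,[\cdot,\cdot]]\rangle$ via invariance and then applying Jacobi; the left-hand side here is what the crosswise contraction produces after antisymmetrizing $z\leftrightarrow w$, and only then does it collapse to the $l\otimes l$ pattern. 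This is exactly the computation in the paper's proof. (There is also a numerical factor to track: the paper's basis computation actually lands on $\tfrac12\langle l(\cdot),l(\cdot)\rangle$, a harmless constant for the subsequent corollary but one your outline would need to confront once the antisymmetrization is made explicit.) As it stands, your proposal is missing the key algebraic idea, so I would count this as a genuine gap rather than a different route to the same result.
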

\begin{proof}
Since $T_0|_{\Delta}={\rm id}$, 
$\Omega^*_-(\Delta;E\otimes E))=\Omega^*(\Delta;(E\otimes E)_-)$.
Then we only need to check the claim on
 $\mathfrak g^{\otimes 3}\otimes \mathfrak g^{\otimes 3}$.
Let $e_1,\ldots,e_{\dim \mathfrak g}\in \mathfrak g$ be an orthonormal basis of $\mathfrak g$.
 Then $\{e_i\otimes e_j-e_j\otimes e_i\mid i<j\}$ is a basis of $(\mathfrak g\otimes \mathfrak g)^-$.
 It is enough to show the claim for this basis.
 \begin{eqnarray*}
 &&\tr\left((e_i\otimes e_j-e_j\otimes e_i)\otimes(e_k\otimes e_l-e_l\otimes e_k)\otimes(\sum_ne_n\otimes e_n)\right)\\
 &=&2(\langle [e_i,e_k],[e_j,e_l]\rangle-\langle [e_i,e_l],[e_j,e_k]\rangle)\\
 &=&2(\langle e_i,[e_k,[e_j,e_l]]\rangle+\langle e_i,[e_l,[e_k,e_j]]\rangle)\\
 &=&2\left((-\langle e_i,[e_j,[e_l,e_k]]\rangle-\langle e_i,[e_l,[e_k,e_j]]\rangle)+\langle e_i,[e_l,[e_k,e_j]]\rangle\right)\\ &=&2\langle e_i,[e_j,[e_k,e_l]]\rangle\\
 &=&2\langle [e_i,e_j],[e_k,e_l]\rangle\\
 &=&\frac{1}{2}\langle 2[e_i,e_j],2[e_k,e_l]\rangle\\
 &=&\frac{1}{2}\langle l(e_i\otimes e_j-e_j\otimes e_i)l(e_k\otimes e_l-e_l\otimes e_k)\rangle. 
 \end{eqnarray*}
\end{proof}
\begin{cor}
$\int_{\Delta}\tr(\overline{\eta}\xi{\bf 1})=0$.
\end{cor}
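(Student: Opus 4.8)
The plan is to first reduce the statement, via Lemma~\ref{keylemma}, to the vanishing of $\int_\Delta I\bigl(l(\overline\eta)\,l(\xi)\bigr)$, and then to argue that $l(\overline\eta)$ is an exact $E_\rho$-valued $1$-form and to integrate by parts against the closed form $l(\xi)$. The reason for pushing everything through the bracket $l\colon E_\rho\otimes E_\rho\to E_\rho$ is that acyclicity is hypothesised for $E_\rho$, not for $E_\rho\otimes E_\rho$; after applying $l$ we land in $\Omega^*(\Delta;E_\rho)$, where $H^*(M;E_\rho)=0$ is available.

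So the first task is to establish two closedness facts. That $l(\xi)$ is closed is immediate: in the construction of Section~3 the form $\xi$ is chosen with $-[\xi]=\Phi([p^*\omega_{S^2}{\bf 1}])$, in particular $d\xi=0$, and since $l$ is a parallel natural transformation it commutes with the covariant exterior derivative, so $d\bigl(l(\xi)\bigr)=l(d\xi)=0$. That $\overline\eta=\eta|_\Delta$ is closed is the one point requiring a small argument. Restrict the identity $\omega-\omega'=d(q^*\eta)$ of Lemma~\ref{43} to $\partial C_2(M)$. By the hypothesis of Proposition~\ref{step1} the restrictions of $\omega$ and $\omega'$ to $\partial C_2(M)$ both equal $p^*\omega_{S^2}{\bf 1}+q^*\xi$, so the left-hand side restricts to $0$, and hence $d\bigl((q^*\eta)|_{\partial C_2(M)}\bigr)=0$. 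Now $(q^*\eta)|_{\partial C_2(M)}=(q|_{\partial C_2(M)})^*\overline\eta$, and $q|_{\partial C_2(M)}\colon\Delta\times S^2\to\Delta$ is the projection, which admits a section; hence $(q|_{\partial C_2(M)})^*$ is injective on differential forms and commutes with $d$, so $d\overline\eta=0$, and therefore $d\bigl(l(\overline\eta)\bigr)=l(d\overline\eta)=0$.

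With these in hand, I would invoke acyclicity: under $\Delta\cong M$ we have $H^1(\Delta;E_\rho)=H^1(M;E_\rho)=0$, so the closed form $l(\overline\eta)\in\Omega^1(\Delta;E_\rho)$ is exact, say $l(\overline\eta)=d\zeta$ with $\zeta\in\Omega^0(\Delta;E_\rho)$. Then, using Lemma~\ref{keylemma}, that $I$ commutes with $d$, the Leibniz rule, and $d\bigl(l(\xi)\bigr)=0$,
\begin{eqnarray*}
\int_\Delta\tr(\overline\eta\,\xi\,{\bf 1})
 &=&\int_\Delta I\bigl(l(\overline\eta)\,l(\xi)\bigr)
  =\int_\Delta I\bigl(d\zeta\cdot l(\xi)\bigr)\\
 &=&\int_\Delta d\,I\bigl(\zeta\cdot l(\xi)\bigr)=0,
\end{eqnarray*}
the last equality by Stokes's theorem, since $\Delta$ is a closed $3$-manifold.

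The only step that is not completely formal is the closedness of $\overline\eta$, i.e. the restriction-to-the-boundary argument together with the injectivity of $(q|_{\partial C_2(M)})^*$; everything else is a combination of Lemma~\ref{keylemma}, the acyclicity input $H^*(M;E_\rho)=0$, and Stokes's theorem.
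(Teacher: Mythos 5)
Your proof is correct and follows essentially the same route as the paper: reduce to $\int_\Delta I\bigl(l(\overline\eta)\,l(\xi)\bigr)$ via Lemma~\ref{keylemma}, use acyclicity of $E_\rho$ to produce a primitive, and apply Stokes's theorem; the only difference is that you take a primitive of the closed $1$-form $l(\overline\eta)$ using $H^1(\Delta;E_\rho)=0$, whereas the paper takes a primitive $\zeta$ of $l(\xi)$ using $H^2(\Delta;E_\rho)=0$ and moves $d$ the other way. Your explicit verification that $\overline\eta$ is closed (restricting $\omega-\omega'=d(q^*\eta)$ to $\partial C_2(M)$ and using injectivity of the pullback along the projection $\Delta\times S^2\to\Delta$) is a point the paper uses only implicitly in its own integration by parts, so it is a welcome addition.
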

\begin{proof}
Thanks to the above lemma,
$$\int_{\Delta}\tr(\overline{\eta}\xi{\bf 1})=\int_{\Delta}I(l(\overline{\eta})l(\xi)).$$
Since $E_{\rho}$ is acyclic, $[l(\xi)]=0\in H^2(\Delta;E_{\rho})=0$.
Thus there exists a 1-form $\zeta\in\Omega^1(\Delta;E_{\rho})$ such that
$d\zeta=l(\xi)$.
Therefore
\begin{eqnarray*}
\int_{\Delta}I(l(\overline{\eta})l(\xi))&=&\int_{\Delta}I(\overline{\eta}d\zeta)\\
&=&\int_{\Delta}dI(\overline{\eta}\zeta)=0.
\end{eqnarray*}
\end{proof}
Thanks to the above lemma, we have
$$Z_{\Theta}(\omega)-Z_{\Theta}(\omega')=0.$$
Similarly,
\begin{eqnarray*}
\Zdam(\omega,\xi)-\Zdam(\omega',\xi)&=&
\int_{C_2(M)}\tr((q^*\pi_1^*\xi)(q^*\pi_2^*\xi)(\omega-\omega'))\\
&=&
\int_{C_2(M)}\tr((q^*\pi_1^*\xi)(q^*\pi_2^*\xi) dq^*\eta)\\
&=&\int_{\partial C_2(M)}\tr\left(q^*((\pi_1|_{\Delta})^*\xi(\pi_2|_{\Delta})^*\xi\overline{\eta})\right).
\end{eqnarray*}
Since  $(\pi_1|_{\Delta})^*\xi(\pi_2|_{\Delta})^*\xi\overline{\eta}$ is a 5-form on
the 3-dimensional manifold $\Delta$,
the last term is vanishing.
This completes the proof of Proposition~\ref{step1}.
\subsection{Proof of Proposition~\ref{step2}}
Since $[\omega_{S^2,0}]=[\omega_{S^2,1}]\in H^2(S^2;\RR)$,
there exists a closed 2-form $\widetilde{\omega_{S_2}}\in \Omega^2([0,1]\times S^2;\RR)$ such that $\widetilde{\omega_{S^2}}|_{\{t\}\times S^2}=\omega_{S^2,t}$  for $t=0,1$.

Since $[\xi_0]=[\xi_1]$(Proposition~\ref{prop21}), there exists a closed 1-form 
$$\widetilde{\xi}\in \Omega^1([0,1]\times\Delta,\pi_{\Delta}^*(E_{\rho}\otimes E_{\rho}))$$
such that $\widetilde{\xi}|_{\{0\}\times\Delta}=\xi_0$ and $\widetilde{\xi}|_{\{1\}\times\Delta}=\xi_1$.
Here $\pi_{\Delta}:[0,1]\times\Delta\to \Delta$ is the projection.

Let $\pi_{C_2(M)}:[0,1]\times C_2(M)\to C_2(M)$ be the projection.
Let $\widetilde{q}={\rm id}_{[0,1]}\times q:[0,1]\times C_2(M)\to [0,1]\times M^2$
and we also denote the restriction map $\widetilde{q}|_{[0,1]\times \partial C_2(M)}:[0,1]\times \partial C_2(M)\to [0,1]\times \Delta$ as $\widetilde q$.
By a similar argument as in Proposition~\ref{prop21}, we can take a closed 2-form 
$$\widetilde{\omega}\in \Omega^2([0,1]\times C_2(M),\pi_{C_2(M)}^*F_{\rho})$$
such that 
$$\widetilde{\omega}|_{[0,1]\times \partial C_2(M)}=\widetilde{p}^*\widetilde{\omega_{S^2}}{\bf 1}+\widetilde{q}^*\widetilde{\xi}.$$
Here $\widetilde{p}=\{p_t\}_t:([0,1]\times \partial C_2(M)=)[0,1]\times\Delta\times S^2\to S^2$ is the homotopy between $p_0$ and $p_1$.

Thanks to Proposition~\ref{step1}, both $Z_{\Theta}(\omega)$ and $\Zdam(\omega,\xi)$
depend only on $\omega|_{\Delta\times S^2}$ and $\xi$.
Thus we have
$Z_{\Theta}(\omega_0)=Z_{\Theta}(\widetilde{\omega}|_{\{0\}\times C_2(M)})$,
$Z_{\Theta}(\omega_1)=Z_{\Theta}(\widetilde{\omega}|_{\{1\}\times C_2(M)})$,
$\Zdam(\omega_0,\xi_0)=\Zdam(\widetilde{\omega}|_{\{0\}\times C_2(M)},\xi_0)$ and
$\Zdam(\omega_1,\xi_1)=\Zdam(\widetilde{\omega}|_{\{1\}\times C_2(M)},\xi_1)$.
We note that, with our orientation convention, 
$$\partial ([0,1]\times C_2(M))=\{1\}\times C_2(M)-\{0\}\times C_2(M)-[0,1]\times \partial C_2(M).$$
Therefore, by using Stokes' theorem, 
\begin{eqnarray*}
0&=&\int_{[0,1]\times C_2(M)}d\tr\widetilde{\omega}^3\\
&=&\int_{\{1\}\times C_2(M)}\tr(\widetilde{\omega}|_{\{1\}\times C_2(M)}^3)
-\int_{\{0\}\times C_2(M)}\tr(\widetilde{\omega}|_{\{0\}\times C_2(M)}^3)\\
&&-\int_{[0,1]\times\partial C_2(M)}\tr(\widetilde{\omega}|_{[0,1]\times\partial C_2(M)}^3)\\
&=&Z_{\Theta}(\widetilde{\omega}|_{\{1\}\times C_2(M)})-Z_{\Theta}(\widetilde{\omega}|_{\{0\}\times C_2(M)})-\int_{[0,1]\times\partial C_2(M)}\tr(\widetilde{p}^*\widetilde{\omega_{S^2}}{\bf 1}+\widetilde{q}^*\widetilde{\xi})^3\\
&=&
Z_{\Theta}(\omega_1)-Z_{\Theta}(\omega_0)
-\int_{[0,1]\times\partial C_2(M)}\tr(3\widetilde{p}^*\widetilde{\omega_{S^2}}{\bf 1}\widetilde{q}^*\widetilde{\xi}^2)
\end{eqnarray*}
We denote $\widetilde{\pi_i}=
{\rm id}_{[0,1]}\times\pi_i:[0,1]\times M^2\to [0,1]\times M$ for $i=1,2$.
We have,
\begin{eqnarray*}
0&=&\int_{[0,1]\times C_2(M)}d\tr\left(
(\widetilde{q}^*\widetilde{\pi_1}^*\widetilde{\xi})
(\widetilde{q}^*\widetilde{\pi_2}^*\widetilde{\xi})
\widetilde{\omega}\right)\\
&=&\Zdam(\omega_1,\xi_1)-\Zdam(\omega_0,\xi_0)\\
&&-
\int_{[0,1]\times \partial C_2(M)}\tr\left(
(\widetilde{q}^*((\widetilde{\pi_1}|_{[0,1]\times\Delta})^*\xi
(\widetilde{\pi_2}|_{[0,1]\times \Delta})^*\xi)\widetilde{\omega}|_{[0,1]\times\partial C_2(M)}\right).
\end{eqnarray*}
Here, $\widetilde{\pi_1}|_{[0,1]\times\Delta}=\widetilde{\pi_2}|_{[0,1]\times \Delta}
:[0,1]\times \Delta\to M$.
Thus $(\widetilde{\pi_1}|_{[0,1]\times \Delta})^*\widetilde{\xi}(\widetilde{\pi_2}|_{[0,1]\times \Delta})^*\widetilde{\xi}=\widetilde{\xi}^2$ under the identification $\Delta=M$.
We have
\begin{eqnarray*}
&&\Zdam(\omega_1,\xi_1)-\Zdam(\omega_0,\xi_0)\\
&=&\int_{[0,1]\times \partial C_2(M)}\tr(\widetilde{p}^*\widetilde{\omega_{S^2}}{\bf 1}\widetilde{q}^*\widetilde{\xi}^2)
\end{eqnarray*}
Then we have
$$Z_{\Theta}(\omega_1)-Z_{\Theta}(\omega_0)=3(\Zdam(\omega_1,\xi_1)-\Zdam(\omega_0,\xi_0)).$$
This completes the proof of Proposition~\ref{step2}.

\section{A framing correction}
In this section, we introduce a correction term for framings to give an invariant of closed 3-manifolds with acyclic representations.
Let $M$ be a closed oriented 3-manifold (without framings).
Recall that $\partial C_2(M)$ is identified with the unit sphere bundle $STM$
(see Section \ref{sec21}).
Take a framing $f:TM\to M\times \RR^3$ of $M$.
Then $(M,f)$ is a framed 3-manifold.
Let $p:(\partial C_2(M)=)STM\to S^2$ be the projection defined by the framing $f$.
Let $\delta(f)\in \ZZ$ be the signature defect (or Hirzebruch defect. For example,
see \cite{Atiyah}, \cite{KM} for the details) of a framing $f$.
For the convenience of the reader, we give a short review of the construction of $\delta(f)$ in the next section.
Let $\rho:\pi_1(M)\to G$ be an acyclic representation as in Section~\ref{sec21}.
\begin{theorem}\label{propfr}
$$Z_1((M,f),\rho)-(\dim \mathfrak g)^2\delta(f)$$
is an topological invariant of $M,\rho$.
\end{theorem}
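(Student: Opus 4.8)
The plan is to combine Theorem~\ref{theorem} with the behaviour of both the $\Theta$-invariant and the defect $\delta(f)$ under change of framing. By Theorem~\ref{theorem}, $Z_1((M,f),\rho)$ depends only on $M$, $\rho$ and the homotopy class of $f$; similarly $\delta(f)$ depends only on the homotopy class of $f$. So it suffices to show that the combination $Z_1((M,f),\rho)-(\dim\mathfrak g)^2\delta(f)$ is unchanged when $f$ is replaced by a framing $f'$ in a different homotopy class. Homotopy classes of framings of a parallelizable closed oriented $3$-manifold form a torsor over $[M,SO(3)]\cong H^1(M;\ZZ/2)\oplus \ZZ$ (or more simply, on the level relevant to the defect, they differ by an integer: twisting by a map $M\to SO(3)$ of degree $k$, with the $\ZZ$-part detected by $\delta$). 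The key point is then the standard fact that under the elementary twist of the framing by a generator, $\delta(f)$ changes by a fixed amount (with the normalization of \cite{Atiyah}, \cite{KM}, by $\pm 1$ for the appropriate generator), while one must compute the corresponding change of $Z_1((M,f),\rho)$.

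First I would make the dependence of $Z_1$ on $f$ explicit. Only the projection $p:\partial C_2(M)=STM\to S^2$, hence the $2$-form $p^*\omega_{S^2}$, depends on the framing; the forms $\omega$, $\xi$ and the local system $F_\rho$ are otherwise unconstrained. Changing $f$ to $f'$ changes $p$ to a homotopic-or-not map $p'$, and the difference $Z_{\Theta}(\omega)-3\Zdam(\omega,\xi)$ for the two choices can be analyzed exactly as in the proof of Proposition~\ref{step2}: choose a homotopy $p_t$ (now not required to be equivariant for all $t$, only at the endpoints — but for the framing twist one can in fact arrange a $T$-equivariant homotopy through sections of $STM$), build $\widetilde\omega$ on $[0,1]\times C_2(M)$, and apply Stokes. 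Since $\xi$ and hence $\widetilde\xi$ can be taken to be pulled back, the $\Zdam$-type boundary contributions again involve forms of too high degree on $\Delta$ and drop out exactly as in Section~4, so the change in $Z_1$ reduces to the purely local term $\int_{[0,1]\times\partial C_2(M)}\tr(\widetilde p^*\widetilde{\omega_{S^2}}\,{\bf 1})^3$ coming from the $Z_{\Theta}$ part. Because $\tr({\bf 1}^{\otimes 3})=\sum_{i,j,k}\langle[e_i,e_j],e_k\rangle\cdot(\text{stuff})$ collapses to a multiple of $\dim\mathfrak g$ times the trace of the structure constants — concretely $\tr(p^*\omega_{S^2}{\bf 1})^3 = (\dim\mathfrak g)^2\,(p^*\omega_{S^2})^3\cdot c_{\mathfrak g}$ for a universal constant — and $(p^*\omega_{S^2})^3$ over $[0,1]\times S^2\times\Delta$ computes a multiple of the degree of the framing twist, the change in $Z_{\Theta}$ is $(\dim\mathfrak g)^2$ times a universal multiple of the integer by which the framings differ. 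This is precisely the content of the Bott--Cattaneo framing-anomaly computation in \cite{BC2}, which I would cite: the $\Theta$-invariant of the framed manifold changes by $(\dim\mathfrak g)^2\delta(f)$ under change of framing, up to the fixed normalization absorbed into the definition of $\delta$.

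Thus the main calculation is: (i) the boundary term in the Stokes argument is entirely the $Z_{\Theta}$-piece, the $\Zdam$-piece contributing nothing by the degree reason already used in Section~4; (ii) that boundary term equals $(\dim\mathfrak g)^2$ times the jump of the signature defect. For (ii) I would not reprove the defect computation from scratch but reduce to the corresponding statement for the trivial one-dimensional coefficient system (equivalently $\mathfrak g=\RR$ with trivial bracket, where $Z_{\Theta}$ degenerates) — no, more honestly: I would invoke that $\int p^*\omega_{S^2}$-type integrals over the sphere bundle compute the relevant characteristic number, and match it with the definition of $\delta(f)$ reviewed in Section~5, exactly as is done in \cite{BC2} and \cite{KM}; the factor $(\dim\mathfrak g)^2$ comes out of $\tr$ applied to ${\bf 1}^{\otimes 3}$ together with the fact that the Killing form is used to both raise indices and contract. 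The hard part, and the only place real care is needed, is bookkeeping the signs and the precise universal constant so that the subtraction of $(\dim\mathfrak g)^2\delta(f)$ cancels the anomaly on the nose; once the normalization of $\omega_{S^2}$, of $\delta$, and the orientation conventions of Section~1 are pinned down, the cancellation is forced. Putting (i) and (ii) together: $Z_1((M,f'),\rho)-Z_1((M,f),\rho)=(\dim\mathfrak g)^2(\delta(f')-\delta(f))$, hence $Z_1((M,f),\rho)-(\dim\mathfrak g)^2\delta(f)$ is independent of $f$ and therefore a topological invariant of $(M,\rho)$.
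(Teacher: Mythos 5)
Your overall strategy---interpolate the boundary data over $[0,1]\times\partial C_2(M)$, apply Stokes to $\tr(\widetilde{\omega}^3)$ and to the $\Zdam$-integrand, observe that the $\Zdam$-boundary terms die for degree reasons, and identify the surviving term with $(\dim\mathfrak g)^2\bigl(\delta(f_1)-\delta(f_0)\bigr)$---is the same as the paper's. But there is a concrete gap in step (i). You assert that the boundary contribution from the $Z_\Theta$-part is the ``purely local'' term $\int\tr(\widetilde{p}^*\widetilde{\omega_{S^2}}{\bf 1})^3$, with everything involving $\xi$ dropping out by degree as in Section~4. Expanding $\tr\bigl((\widetilde{\omega}_{\partial}{\bf 1}+Q^*\xi)^3\bigr)$, the terms with $(Q^*\xi)^2$ and $(Q^*\xi)^3$ do vanish (because $\xi$ is pulled back from the $3$-manifold $\Delta$), but the cross term $3\tr(\widetilde{\omega}_{\partial}^2{\bf 1}^{\otimes 2}Q^*\xi)$ is a $6$-form on the $6$-dimensional manifold $[0,1]\times\partial C_2(M)$ and does \emph{not} vanish by degree. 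The degree argument you import from Section~4 relied on $\widetilde{\omega_{S^2}}$ living on the $3$-manifold $[0,1]\times S^2$, so that $\widetilde{\omega_{S^2}}^2=0$; that is exactly what fails here, since $\widetilde{\omega}_{\partial}$ is a form on all of $[0,1]\times\partial C_2(M)$. The paper kills this cross term by a separate algebraic lemma: $\tr$ is invariant under the flip $T_E^{\otimes 3}$, ${\bf 1}$ is symmetric, and $\xi$ lies in the $(-1)$-eigenspace, so $\tr(\widetilde{\omega}_{\partial}{\bf 1}^{\otimes 2}Q^*\xi)=-\tr(\widetilde{\omega}_{\partial}{\bf 1}^{\otimes 2}Q^*\xi)=0$. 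Without this (or a substitute) your formula for the framing anomaly of $Z_\Theta$ is not established.

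Two further points. The identification $\delta(f_1)-\delta(f_0)=\int_{[0,1]\times\partial C_2(M)}\widetilde{\omega}_{\partial}^3$ is where the exact matching of constants happens, and you explicitly leave the normalization to a citation of \cite{BC2} and to ``bookkeeping''; the paper instead proves it directly from the definition of $\delta$ given in Section~5 (take $W$ with $\partial W=M$ and a collar, extend $\widetilde{\omega}_{\partial}$ to $\alpha_W$ with $[\alpha_W]=e(F_W)/2$, and use Novikov additivity). Since the content of the theorem is precisely that the coefficient $(\dim\mathfrak g)^2$ cancels the anomaly on the nose, this step should be carried out, not cited with an unverified normalization. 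Finally, for framings in different homotopy classes the maps $p_0,p_1:STM\to S^2$ need not be homotopic, so the interpolation should be cohomological---the paper uses only that $[p_0^*\omega_{S^2}]=[p_1^*\omega_{S^2}]$ in $H^2_-(\Delta\times S^2;\RR)\cong H^2(S^2;\RR)$ to produce a closed $\widetilde{\omega}_{\partial}$ on the cylinder---rather than running through a homotopy $p_t$ of maps as you propose; your hedge about arranging an equivariant homotopy is unnecessary once the argument is set up this way.
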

\subsection{The signature defect $\delta(p)$}
Let $W$ be a compact 4-manifold such that $\partial W=M$ and
its Euler characteristic is zero.
Then there exists an $\RR^3$ sub-bundle $T^vW$ of $TW$ satisfying $T^vW|_M=TM$.
Let $ST^vW\to W$ be the unit sphere bundle of $T^vW\to W$.
Thus $ST^vW$ is a 6-dimensional manifold with $\partial ST^vW=STM$. 
We denote by $F_W\to ST^vW$ the tangent bundle along the fiber of the $S^2$ bundle $\pi:ST^vW\to W$.

Take a closed 2-form $\alpha_W\in \Omega^2(ST^vW;\RR)$ such that
$ \alpha_W|_{STM}=p^*\omega_{S^2}$ and $[\alpha_W]=e(F_W)/2\in H^2(ST^vW;\RR)$,
where $e(F_W)$ is the Euler class of $F_W\to ST^vW$.
\begin{lemma}
When $\partial W=M=\emptyset$, we have $\int_{ST^vW}\alpha_W^3=\frac{3}{4}{\rm Sign}W$.
Here $\Sign W$ is the signature of $W$.  
\end{lemma}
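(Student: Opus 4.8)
The plan is to reduce the closed case to a local computation on the $S^2$-bundle by realizing $\alpha_W^3$ as (half the cube of) the Euler form of $F_W$ and then relating the integral of $e(F_W)^3$ over $ST^vW$ to a characteristic number of $W$. First I would recall that $F_W \to ST^vW$ is the vertical tangent bundle of the $2$-sphere bundle $\pi \co ST^vW \to W$, so its Euler class restricts on each fiber $S^2$ to the Euler class of $TS^2$, hence integrates to $2$ along the fiber. Since $\alpha_W$ represents $e(F_W)/2$, the form $\alpha_W^3$ represents $e(F_W)^3/8$, and by Stokes / de Rham it suffices to compute $\int_{ST^vW} e(F_W)^3$ when $W$ is closed.

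Next I would push the computation down to $W$ via the Gysin sequence / fiber integration for $\pi$. Writing $e = e(F_W) \in H^2(ST^vW)$, one has the standard relation for a rank-$3$ oriented vector bundle: $e$ satisfies a quadratic equation over $H^*(W)$, namely $e^2 = \pi^* p_1(T^vW)$ up to the usual sign and factor coming from $e(E)^2 = p_1(E)$ for an oriented rank-$3$ bundle (equivalently $e(E)^2 = p_1(E)$ since $E \cong E^*$). Then $\pi_* 1 = 0$, $\pi_* e = 2$, and $\pi_*(e^2) = $ (the Euler class of $T^vW$ itself up to sign, which vanishes in the relevant degree on a $4$-manifold only if $\chi$-related terms vanish — here one uses $\chi(W)=0$ is not yet imposed in the closed case, so I would instead just carry $\pi_*(e^2) = e(T^vW)$ along). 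Carrying this through, $\int_{ST^vW} e^3 = \int_W \pi_*(e^3) = \int_W \pi_*(e \cdot e^2) = \int_W \pi_*(e \cdot \pi^* p_1) = \int_W \big(\pi_* e\big)\, p_1(T^vW) = 2 \int_W p_1(T^vW)$, using the projection formula and $\pi_* e = 2$. Finally, since $T^vW$ is a rank-$3$ subbundle of $TW$ with a rank-$1$ complement (trivial after stabilization, or at least $p_1$-trivial), $p_1(T^vW) = p_1(TW)$, and by the Hirzebruch signature theorem $\int_W p_1(TW) = 3\,\Sign W$. Combining, $\int_{ST^vW} \alpha_W^3 = \tfrac18 \int_{ST^vW} e^3 = \tfrac18 \cdot 2 \cdot 3\,\Sign W = \tfrac34 \Sign W$.

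The step I expect to be the main obstacle is making the Gysin-pushforward bookkeeping genuinely rigorous with the correct signs and normalization: one must be careful that $e(F_W)^2 = \pi^* p_1(T^vW)$ really holds (this is the splitting-principle computation $e(E)^2 = p_1(E)$ for oriented rank $3$, together with the fact that the fiberwise Euler class of the tautological vertical bundle on $SE$ pulls back correctly), and that the fiber-integration normalization $\pi_* e(F_W) = 2$ is consistent with the chosen orientation of $S^2$ and with $\int_{S^2}\omega_{S^2} = 1$. An alternative, perhaps cleaner, route avoiding signs is to write $\alpha_W = \tfrac12 e(F_W) + d\beta$ is not available (no global primitive), so instead I would argue directly with forms: choose a metric connection on $T^vW$, let $e_\nabla$ be the Chern–Weil Euler form of $F_W$ and note $\alpha_W - \tfrac12 e_\nabla$ is exact, so $\int \alpha_W^3 = \tfrac18 \int e_\nabla^3$, and then evaluate $\int_{ST^vW} e_\nabla^3$ by the same fiber-integration identity now at the level of forms. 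Either way the essential input is $\int_W p_1 = 3\,\Sign W$.
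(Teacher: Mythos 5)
Your argument is essentially the paper's own proof: replace $\alpha_W$ by $\tfrac12 e(F_W)$, use $e(F_W)^2=p_1(F_W)=\pi^*p_1(T^vW)$, fiber-integrate with $\pi_*e(F_W)=2$, identify $p_1(T^vW)=p_1(TW)$, and finish with the signature theorem, giving $\tfrac18\cdot2\cdot3\,\Sign W=\tfrac34\Sign W$. One small correction to your justification: the identity you need is not an ``$e^2=p_1$ for a rank-$3$ bundle'' (which is degree-inconsistent) but $e(F_W)^2=p_1(F_W)$ for the oriented rank-$2$ bundle $F_W$, combined with $\underline{\RR}\oplus F_W=\pi^*T^vW$, exactly as the paper states it.
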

\begin{proof}
We give an outline of the proof.
See Appendix of \cite{shimizu} or Proposition~2.45 of \cite{Lescop}, for the details of the proof.

Since $W$ is closed, $\int_{ST^vW}\alpha_W^3=\int_{ST^vW}\left(\frac{1}{2}e(F_W)\right)^3$.
We denote by $p_1(F_W)\in H^4(ST^vW;\RR)$ the 1st Pontrjagin class of the bundle $F_W$. We remark that $\underline{\RR}\oplus F_W=\pi^*T^vW$ and 
$\underline{\RR}\oplus  T^vW=TW$. Here $\underline{\RR}$ is the trivial $\RR$ bundle over an appropriate manifold.
Therefore,
\begin{eqnarray*}
\int_{ST^vW}\alpha_W^3&=&\frac{1}{8}\int_{ST^vW}e(F_W)^3\\
&=&\frac{1}{8}\int_{ST^vW}e(F_W)p_1(F_W)\\
&=&\frac{1}{8}\int_{ST^vW}e(F_W)\pi^*p_1(T^vW)\\
&=&\frac{1}{4}\int_{W}p_1(TW)\\
&=&\frac{3}{4}\Sign W.
\end{eqnarray*}
\end{proof}
Thanks to the Novikov additivity for the signature, the following corollary holds. 
\begin{cor}
The signature defect $\delta(f)$ defined to be 
$\delta(f)=\int_{ST^W}\alpha_W^3-\frac{3}{4}\Sign W$
is independent of the choices of $W$ and $\alpha_W$.  
\end{cor}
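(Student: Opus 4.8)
The plan is to prove the two independence statements separately: first that, for a fixed bounding manifold $W$, the quantity $\int_{ST^vW}\alpha_W^3$ does not depend on the choice of $\alpha_W$, and then that the full expression $\int_{ST^vW}\alpha_W^3-\tfrac34\Sign W$ does not depend on $W$. The second part is where Novikov additivity enters.

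For the first part, fix $W$ and let $\alpha_W,\alpha_W'$ be two admissible forms, so that $\alpha_W|_{STM}=\alpha_W'|_{STM}=p^*\omega_{S^2}$ and $[\alpha_W]=[\alpha_W']=e(F_W)/2$. Then $\alpha_W-\alpha_W'=d\lambda$ is exact, and since every form in sight is closed, Stokes' theorem turns $\int_{ST^vW}\alpha_W^3-\int_{ST^vW}\alpha_W'^3=\int_{ST^vW}d\lambda\,(\alpha_W^2+\alpha_W\alpha_W'+\alpha_W'^2)$ into the boundary integral $\int_{STM}\lambda|_{STM}\,(\alpha_W^2+\alpha_W\alpha_W'+\alpha_W'^2)|_{STM}$. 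On $STM$ each of the three summands equals $(p^*\omega_{S^2})^2=p^*(\omega_{S^2}^2)=0$, because $\omega_{S^2}^2$ is a $4$-form on the surface $S^2$; hence the boundary integral vanishes. I note that the identical computation, performed on a closed $X$ with no boundary term, shows $\int_{ST^vX}\beta_X^3$ depends only on $[\beta_X]$, so the preceding lemma may be read as the cohomological identity $\langle(e(F_X)/2)^3,[ST^vX]\rangle=\tfrac34\Sign X$.

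For the second part, given two admissible pairs $(W_0,\alpha_{W_0})$ and $(W_1,\alpha_{W_1})$ for the same framing $f$, the two boundary forms agree, $\alpha_{W_0}|_{STM}=\alpha_{W_1}|_{STM}=p^*\omega_{S^2}$, so I will glue along $M$ to form the closed oriented $4$-manifold $X=W_0\cup_M(-W_1)$. Because $\chi(M)=0$ and $\chi(W_0)=\chi(W_1)=0$, one has $\chi(X)=0$, so $X$ admits a rank-$3$ subbundle $T^vX\subset TX$, obtained by gluing $T^vW_0$ and $T^vW_1$ (which both restrict to $TM$ over $M$); correspondingly $ST^vX=ST^vW_0\cup_{STM}ST^v(-W_1)$. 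The forms $\alpha_{W_0}$ and $\alpha_{W_1}$ agree on $STM$ and glue (smoothing across a collar if needed) to a closed $\alpha_X\in\Omega^2(ST^vX;\RR)$ with $\int_{ST^vX}\alpha_X^3=\int_{ST^vW_0}\alpha_{W_0}^3-\int_{ST^vW_1}\alpha_{W_1}^3$, the sign coming from the orientation reversal on $-W_1$. Granting for a moment that $\int_{ST^vX}\alpha_X^3=\tfrac34\Sign X$, Novikov additivity gives $\Sign X=\Sign W_0-\Sign W_1$, and subtracting yields $\int_{ST^vW_0}\alpha_{W_0}^3-\tfrac34\Sign W_0=\int_{ST^vW_1}\alpha_{W_1}^3-\tfrac34\Sign W_1$, which is the desired independence of $W$.

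The main obstacle, as I see it, is the granted claim $\int_{ST^vX}\alpha_X^3=\tfrac34\Sign X$, since the glued form $\alpha_X$ need not represent $e(F_X)/2$ exactly. I will handle this by setting $\gamma=[\alpha_X]-e(F_X)/2\in H^2(ST^vX;\RR)$ and observing that, since $\alpha_X$ restricts on each piece $ST^vW_i$ to a representative of $e(F_{W_i})/2=(e(F_X)/2)|_{ST^vW_i}$, the class $\gamma$ restricts to $0$ on each of the two pieces that cover $ST^vX$. A relative cup-product argument then forces $\gamma^2=0$, so expanding $[\alpha_X]^3=(e(F_X)/2+\gamma)^3$ leaves only $\langle(e(F_X)/2)^3,[ST^vX]\rangle+3\langle(e(F_X)/2)^2\gamma,[ST^vX]\rangle$. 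Writing $\gamma$ as the Mayer--Vietoris coboundary of a class on $STM$, the second pairing reduces to a pairing against $(e(F_X)/2)^2|_{STM}$; but $\gamma|_{STM}=0$ gives $(e(F_X)/2)|_{STM}=[\alpha_X]|_{STM}=[p^*\omega_{S^2}]$, whose square vanishes by the very computation $(p^*\omega_{S^2})^2=0$ used above. Hence $\int_{ST^vX}\alpha_X^3=\langle(e(F_X)/2)^3,[ST^vX]\rangle=\tfrac34\Sign X$, which closes the argument. Combined with the first part, this shows $\delta(f)$ is independent of both $W$ and $\alpha_W$.
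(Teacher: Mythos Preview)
Your argument is correct and follows the same route the paper intends: the paper's entire proof is the single sentence ``Thanks to the Novikov additivity for the signature, the following corollary holds,'' i.e., glue two bounding $4$-manifolds to a closed one, invoke the preceding lemma, and subtract signatures. You have supplied all of the details that sentence suppresses, including the separate (easy) check that $\int_{ST^vW}\alpha_W^3$ is independent of $\alpha_W$ via Stokes and $(p^*\omega_{S^2})^2=0$.

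One remark on efficiency: your treatment of the ``granted claim'' $\int_{ST^vX}\alpha_X^3=\tfrac34\Sign X$ is correct but more elaborate than necessary. Having already proved in Part~1 that the integral is independent of the admissible form, you may simply \emph{choose} $\alpha_X$ to represent $e(F_X)/2$ with $\alpha_X|_{STM}=p^*\omega_{S^2}$ (this restriction class is $e(F_M)/2$, so such a representative exists), and then set $\alpha_{W_i}:=\alpha_X|_{ST^vW_i}$; by Part~1 these give the same integrals as your original $\alpha_{W_i}$, and now the lemma applies to $\alpha_X$ directly with no correction term $\gamma$ to analyze. Your Mayer--Vietoris/relative-cup argument for $\gamma^2=0$ and the cross term is valid, but this shortcut avoids it entirely.
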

\subsection{Proof of Theorem~\ref{propfr}}
Let $f_0,f_1:TM\to M\times \RR^3$ be framings and
let $p_0,p_1:\partial C_2(M)\to S^2$ be the projections given by framings $f_0,f_1$ respectively.
Since $[p_0^*\omega_{S^2}]$ and $[p_1^*\omega_{S^2}]$ are in $H^2_-(\Delta\times S^2;\RR)=H^2(S^2;\RR)=\RR$,
$[p_0^*\omega_{S^2}]=[p_1^*\omega_{S^2}]$.
Thus there exists a closed 2-form 
$$\widetilde{\omega}_{\partial}\in\Omega^2_-([0,1]\times\partial C_2(M);\RR)$$
such that 
$\widetilde{\omega}_{\partial}|_{\{0\}\times\partial C_2(M)}=p_0^*\omega_{S^2},$
$\widetilde{\omega}_{\partial}|_{\{1\}\times\partial C_2(M)}=p_1^*\omega_{S^2}.$
We recall that $\xi\in\Omega^2_-(\Delta;E_{\rho}\otimes E_{\rho})$ is a closed 2-form representing $\Phi([p^*\omega_{S^2}{\bf 1}])=\Phi \circ c^*([p^*\omega_{S^2}])$ when we take a projection $p:\partial C_2(M)\to S^2$ given by a framing $f$.
The homomorphism $\Phi\circ c^*$ is independent from the choice of a framing.
Then we can use same $\xi\in\Omega^2_-(\Delta;E_{\rho}\otimes E_{\rho})$ for any framing.

By a similar argument as in proof of Proposition\ref{prop21}, we can take a closed 2-form 
$$\widetilde{\omega}\in \Omega^2([0,1]\times C_2(M);\pi_{C_2(M)}^*F_{\rho})$$
 such that 
$$\widetilde{\omega}|_{[0,1]\times \partial C_2(M)}=\widetilde{\omega}_{\partial}{\bf 1}+Q^*\xi.$$
Here, $\pi_{C_2(M)}:[0,1]\times C_2(M)\to C_2(M)$ and
$Q:[0,1]\times\partial C_2(M)\to \Delta$ are the projections.
We denote by 
$$\omega_0=\widetilde{\omega}|_{\{0\}\times C_2(M)},$$
$$\omega_1=\widetilde{\omega}|_{\{1\}\times C_2(M)}.$$
Then, 
$$Z_1((M,f_0),\rho)=Z_{\Theta}(\omega_0)-3\Zdam(\omega_0,\xi),$$
$$Z_1((M,f_1),\rho)=Z_{\Theta}(\omega_1)-3\Zdam(\omega_1,\xi).$$
Thanks to Stokes' theorem, 
\begin{eqnarray*}
0&=&\int_{[0,1]\times C_2(M)}d\tr(\widetilde{\omega}^3)\\
&=&Z_{\Theta}(\omega_1)-Z_{\Theta}(\omega_0)
-\int_{[0,1]\times \partial C_2(M)}\tr(\widetilde{\omega}|_{[0,1]\times\partial C_2(M)}^3)\\
&=&Z_{\Theta}(\omega_1)-Z_{\Theta}(\omega_0)
-\int_{[0,1]\times \partial C_2(M)}\tr(\widetilde{\omega}_{\partial}^3{\bf 1}^{\otimes 3})
-\int_{[0,1]\times \partial C_2(M)}3\tr(\widetilde{\omega}_{\partial}^2{\bf 1}^{\otimes 2}Q^*\xi)\\
&=&
Z_{\Theta}(\omega_1)-Z_{\Theta}(\omega_0)
-\int_{[0,1]\times \partial C_2(M)}\widetilde{\omega}_{\partial}^3\tr({\bf 1}^{\otimes 3})
-\int_{[0,1]\times \partial C_2(M)}3\tr(\widetilde{\omega}_{\partial}^2{\bf 1}^{\otimes 2}Q^*\xi)\\
&=&
Z_{\Theta}(\omega_1)-Z_{\Theta}(\omega_0)
-(\dim \mathfrak g)^2\int_{[0,1]\times \partial C_2(M)}\widetilde{\omega}_{\partial}^3
-\int_{[0,1]\times \partial C_2(M)}3\tr(\widetilde{\omega}_{\partial}^2{\bf 1}^{\otimes 2}Q^*\xi).
\end{eqnarray*}
We denote $\overline{\pi}_i:[0,1]\times M^2\to M,
(t,x_1,x_2)\mapsto x_i$ for $i=1,2$.
We have,
\begin{eqnarray*}
0&=&\int_{[0,1]\times C_2(M)}d\tr\left(
(\widetilde{q}^*\overline{\pi}_1^*\xi)
(\widetilde{q}^*\overline{\pi}_2^*\xi)
\widetilde{\omega}\right)\\
&=&\Zdam(\omega_1,\xi)-\Zdam(\omega_0,\xi)\\
&&-
\int_{[0,1]\times \partial C_2(M)}\tr\left(
(\widetilde{q}^*((\overline{\pi}_1|_{[0,1]\times\Delta})^*\xi
(\overline{\pi}_2|_{[0,1]\times \Delta})^*\xi)\widetilde{\omega}_{\partial}{\bf 1}\right)\\
&=&
\Zdam(\omega_1,\xi)-\Zdam(\omega_0,\xi)-
\int_{[0,1]\times \partial C_2(M)}\tr\left(
Q^*\xi^2\widetilde{\omega}_{\partial}{\bf 1}\right)\\
&=&0
\end{eqnarray*}
Thus we have
$$Z_1((M,f_0),\rho)-Z_1((M,f_1),\rho)
=(\dim\mathfrak g)^2\int_{[0,1]\times \partial C_2(M)}\widetilde{\omega}_{\partial}^3
+\int_{[0,1]\times \partial C_2(M)}3\tr(\widetilde{\omega}_{\partial}^2{\bf 1}^{\otimes 2}Q^*\xi).$$
\begin{lemma}
$$\tr(\widetilde{\omega}_{\partial}{\bf 1}^{\otimes 2}Q^*\xi)=0.$$
\end{lemma}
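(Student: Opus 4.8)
The plan is to show that the $2$-form $\tr(\widetilde{\omega}_\partial {\bf 1}^{\otimes 2} Q^*\xi)$, which at first glance is a $2$-form twisted by $\underline{\RR}$ on $[0,1]\times\partial C_2(M)$, actually vanishes identically — not merely as a cohomology class but pointwise — because of the algebraic structure of the contraction. The key observation is that $\widetilde{\omega}_\partial$ is an $\RR$-valued form (it carries no coefficients in $E_\rho$), so the coefficient part of $\tr(\widetilde{\omega}_\partial{\bf 1}^{\otimes 2}Q^*\xi)$ is obtained by applying ${\rm Tr}\colon E_\rho^{\otimes 3}\to\underline{\RR}$, on the first factor of $F_\rho^{\otimes 3}=q^*(E_\rho^{\otimes 3}\boxtimes E_\rho^{\otimes 3})$, to the element built from two copies of ${\bf 1}\in\mathfrak g\otimes\mathfrak g$ paired against $\xi\in\Omega^2_-(\Delta;E_\rho\otimes E_\rho)$. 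So the statement reduces to a purely Lie-algebraic identity on $\mathfrak g^{\otimes 3}$, exactly in the spirit of the computation in Lemma~\ref{keylemma}.

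Concretely, I would fix an orthonormal basis $e_1,\dots,e_{\dim\mathfrak g}$ of $\mathfrak g$ and recall that ${\bf 1}=\sum_n e_n\otimes e_n$. Writing $\xi$ locally as a sum of terms $\phi\,(e_a\otimes e_b - e_b\otimes e_a)$ with $\phi$ a scalar $2$-form, the coefficient of $\tr(\widetilde{\omega}_\partial{\bf 1}^{\otimes 2}Q^*\xi)$ is (up to the scalar form factor) a sum of terms of the form ${\rm Tr}\bigl(e_m\otimes e_n\otimes(e_a-e_b)\bigr)\cdot(\text{other factors})$ — but more carefully, one must track how the three tensor slots of the single ${\rm Tr}$ are filled by one leg from each of ${\bf 1}$, ${\bf 1}$, and $\xi$. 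The resulting scalar is, after summing over the repeated indices $n$ coming from the two copies of ${\bf 1}$, a multiple of $\sum_{m,n}\langle[e_m,e_n],\,e_a\rangle$ times $\langle e_m,\cdot\rangle\langle e_n,\cdot\rangle$-type contractions; the point is that a term like $\sum_{m}\langle[e_m,e_m],z\rangle$ vanishes by antisymmetry of the bracket, or more generally the total contraction is symmetric in a pair of indices against which an antisymmetric bracket is applied, forcing it to zero. So the whole expression is identically $0$ before integration.

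The one genuine subtlety — and the step I expect to be the main obstacle — is bookkeeping: making the combinatorics of "which leg of which tensor factor goes into which slot of ${\rm Tr}$" completely unambiguous, since $F_\rho^{\otimes 3}=q^*(E_\rho^{\otimes 3}\boxtimes E_\rho^{\otimes 3})$ has six tensor slots and $\tr={\rm Tr}^{\boxtimes 2}$ splits them $3+3$, while $\widetilde\omega_\partial$ contributes to the $\underline\RR$ part only, $Q^*\xi$ contributes $E_\rho\otimes E_\rho$ sitting diagonally (pulled back from $\Delta$), and each ${\bf 1}$ contributes $E_\rho\otimes E_\rho$ split across the two $\boxtimes$-factors. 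Once the slots are correctly matched, the vanishing is a one-line antisymmetry argument. I would present this by reducing, exactly as in the proof of Lemma~\ref{keylemma}, to checking the identity on basis elements $e_i\otimes e_j\pm e_j\otimes e_i$ of $(\mathfrak g\otimes\mathfrak g)^{\pm}$ and observing that every resulting term contains a factor $\sum_n\langle[e_n,e_n],\,\cdot\,\rangle=0$ or, after using invariance of the Killing form, a bracket of an element with itself.

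\begin{proof}
Since $\widetilde{\omega}_\partial\in\Omega^2_-([0,1]\times\partial C_2(M);\RR)$ is a scalar-valued form, the local system coefficient of $\tr(\widetilde{\omega}_\partial{\bf 1}^{\otimes 2}Q^*\xi)$ is obtained by applying the natural transformation $\tr={\rm Tr}^{\boxtimes 2}\colon F_\rho^{\otimes 3}\to\underline{\RR}$ to the section $\Psi$ of $F_\rho^{\otimes 3}=q^*(E_\rho^{\otimes 3}\boxtimes E_\rho^{\otimes 3})$ built, in the order of the factors, from ${\bf 1}$, ${\bf 1}$ and $q^*$ of a local section of $E_\rho\otimes E_\rho$ representing $\xi$. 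As $T_0|_\Delta={\rm id}$, we have $\Omega^*_-(\Delta;E_\rho\otimes E_\rho)=\Omega^*(\Delta;(E_\rho\otimes E_\rho)^-)$, so it suffices to verify that $\tr\bigl({\bf 1}\otimes{\bf 1}\otimes(e_a\otimes e_b-e_b\otimes e_a)\bigr)=0$ for a basis element $e_a\otimes e_b-e_b\otimes e_a$ of $(\mathfrak g\otimes\mathfrak g)^-$, where $e_1,\dots,e_{\dim\mathfrak g}$ is an orthonormal basis of $\mathfrak g$ and ${\bf 1}=\sum_n e_n\otimes e_n$.

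Recall that $\tr={\rm Tr}^{\boxtimes 2}$ acts by applying ${\rm Tr}$ to the three $\pi_1$-slots and ${\rm Tr}$ to the three $\pi_2$-slots, where ${\rm Tr}(x\otimes y\otimes z)=\langle[x,y],z\rangle$. The three tensor factors ${\bf 1}$, ${\bf 1}$, $e_a\otimes e_b-e_b\otimes e_a$ each split one leg into the $\pi_1$-group and one leg into the $\pi_2$-group. Writing ${\bf 1}=\sum_m e_m\otimes e_m$ and ${\bf 1}=\sum_n e_n\otimes e_n$ for the first two factors, we obtain
\begin{eqnarray*}
\tr\bigl({\bf 1}\otimes{\bf 1}\otimes(e_a\otimes e_b-e_b\otimes e_a)\bigr)
&=&\sum_{m,n}\Bigl(\langle[e_m,e_n],e_a\rangle\,\langle[e_m,e_n],e_b\rangle-\langle[e_m,e_n],e_b\rangle\,\langle[e_m,e_n],e_a\rangle\Bigr)\\
&=&0,
\end{eqnarray*}
since the two summands cancel termwise. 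Hence $\tr(\widetilde{\omega}_\partial{\bf 1}^{\otimes 2}Q^*\xi)$ vanishes identically, and in particular $\tr(\widetilde{\omega}_\partial{\bf 1}^{\otimes 2}Q^*\xi)=0$.
\end{proof}
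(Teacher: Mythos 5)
Your proof is correct and rests on the same mechanism as the paper's: the coefficient contraction dies because ${\bf 1}$ is symmetric under swapping its two legs while $\xi$ is antisymmetric, and $\tr$ is unchanged by the global swap. The paper packages this coordinate-freely via the involution $T_E^{\otimes 3}$ (using $\tr\circ T_E^{\otimes 3}=\tr$, $T_E({\bf 1})={\bf 1}$, $T_E\xi=-\xi$, so the form equals its own negative), whereas you carry out the identical cancellation explicitly on an orthonormal basis — the termwise equality $\langle[e_m,e_n],e_a\rangle\langle[e_m,e_n],e_b\rangle=\langle[e_m,e_n],e_b\rangle\langle[e_m,e_n],e_a\rangle$ is precisely that swap invariance in coordinates.
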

\begin{proof}
Let $T_E:E_{\rho}\otimes E_{\rho}\to E_{\rho}\otimes E_{\rho}$ be the involution 
induced by $\mathfrak g\otimes \mathfrak g\to \mathfrak g\otimes \mathfrak g,
x\otimes y\mapsto y\otimes x$.
Cleary, $\tr\circ T_E^{\otimes 3}=\tr:E^{\otimes 3}\otimes E^{\otimes 3}\to \underline{\RR}$.
Since $T_E({\bf 1})={\bf 1}$ and $T_E^*=(T_0|_{\Delta})^*$ on $\Omega^1(\Delta;E_{\rho}\otimes E_{\rho})$, we have
$$\tr(\widetilde{\omega}_{\partial}{\bf 1}^{\otimes 2}Q^*\xi)=\tr\left(T_E^{\otimes 3}(\widetilde{\omega}_{\partial}{\bf 1}^{\otimes 2}Q^*\xi)\right)
=-\tr(\widetilde{\omega}_{\partial}{\bf 1}^{\otimes 2}Q^*\xi).$$
Thus $\tr(\widetilde{\omega}_{\partial}{\bf 1}^{\otimes 2}Q^*\xi)=0$.
\end{proof}
\begin{lemma} 
$$\delta(f_1)-\delta(f_0)=\int_{[0,1]\times\partial C_2(M)}\widetilde{\omega}_{\partial}^3.$$
\end{lemma}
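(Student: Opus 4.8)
The plan is to evaluate both sides over a single fixed bounding manifold. I would choose a compact $4$-manifold $W$ with $\partial W=M$ and $\chi(W)=0$, a rank-$3$ vertical subbundle $T^vW\subset TW$ with $T^vW|_M=TM$, and form the $S^2$-bundle $ST^vW\to W$; recall that $\partial(ST^vW)=STM=\partial C_2(M)$ and that $F_W$ denotes the fibrewise tangent bundle. Since along a fibre of $STM\to M$ the bundle $F_W$ restricts to $TS^2$ (Euler number $2$), and the framing $f_i$ identifies $F_W|_{STM}$ with the pull-back of $TS^2$ along $p_i$, one has $e(F_W)|_{STM}=2[p_i^*\omega_{S^2}]$ in $H^2(STM;\RR)$ for $i=0,1$; in particular there is a closed $2$-form $\alpha_i\in\Omega^2(ST^vW;\RR)$ with $\alpha_i|_{STM}=p_i^*\omega_{S^2}$ and $[\alpha_i]=e(F_W)/2$. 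By the corollary above, $\delta(f_i)=\int_{ST^vW}\alpha_i^3-\tfrac34\Sign W$ regardless of the choices of $W$ and $\alpha_i$, so
$$\delta(f_1)-\delta(f_0)=\int_{ST^vW}\alpha_1^3-\int_{ST^vW}\alpha_0^3 .$$

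The next step is to interpolate $\alpha_0$ and $\alpha_1$ by a closed form on $[0,1]\times ST^vW$ whose restriction to $[0,1]\times STM$ is exactly $\widetilde\omega_\partial$. The three prescribed $2$-forms $\alpha_0$, $\alpha_1$, $\widetilde\omega_\partial$ do fit together on the corners $\{0\}\times STM$ and $\{1\}\times STM$, because $\alpha_i|_{STM}=p_i^*\omega_{S^2}=\widetilde\omega_\partial|_{\{i\}\times STM}$. Moreover the cohomology classes match: $[\alpha_0]=[\alpha_1]=e(F_W)/2$, and under the isomorphism $H^2([0,1]\times STM;\RR)\cong H^2(STM;\RR)$ the class $[\widetilde\omega_\partial]$ agrees with the restriction of $e(F_W)/2$ (here one uses $[p_0^*\omega_{S^2}]=[p_1^*\omega_{S^2}]$ in $H^2_-(\partial C_2(M);\RR)=\RR$, exactly as at the start of the proof of Theorem~\ref{propfr}). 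Arguing as in the proof of Proposition~\ref{prop21} — first extend the $2$-form prescribed on the boundary to some $2$-form on $[0,1]\times ST^vW$, then correct it to a closed one, the relevant relative obstruction vanishing by the matching of classes just recorded — I would produce a closed form
$$\widetilde\alpha\in\Omega^2([0,1]\times ST^vW;\RR),\qquad
\widetilde\alpha|_{\{0\}\times ST^vW}=\alpha_0,\quad
\widetilde\alpha|_{\{1\}\times ST^vW}=\alpha_1,\quad
\widetilde\alpha|_{[0,1]\times STM}=\widetilde\omega_\partial .$$

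Finally I would apply Stokes' theorem to the closed $6$-form $\widetilde\alpha^3$. With the orientation convention of the paper, $\partial([0,1]\times ST^vW)=\{1\}\times ST^vW-\{0\}\times ST^vW-[0,1]\times STM$, hence
$$0=\int_{[0,1]\times ST^vW}d(\widetilde\alpha^3)
=\int_{ST^vW}\alpha_1^3-\int_{ST^vW}\alpha_0^3-\int_{[0,1]\times\partial C_2(M)}\widetilde\omega_\partial^3 .$$
Comparing with the first display yields $\delta(f_1)-\delta(f_0)=\int_{[0,1]\times\partial C_2(M)}\widetilde\omega_\partial^3$, as claimed.

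The step I expect to be the main obstacle is the construction of $\widetilde\alpha$: one must check that the $2$-form prescribed on $\partial([0,1]\times ST^vW)$ really extends to a \emph{closed} $2$-form on the whole product, which is a relative de Rham argument of exactly the same nature as Proposition~\ref{prop21} and the construction of $\widetilde\omega$ in the proof of Theorem~\ref{propfr}. The remaining ingredients — independence of $\delta(f)$ from $W$ and $\alpha_W$, the identification $e(F_W)|_{STM}=2[p^*\omega_{S^2}]$, and the Stokes computation — are routine.
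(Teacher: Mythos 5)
Your overall strategy is sound and the final Stokes computation is correct, but your route genuinely differs from the paper's. The paper never forms the product $[0,1]\times ST^vW$: it takes a single $W$ with a collar $[0,1]\times M$ of $\partial W$, arranges $T^vW|_{[0,1]\times M}=[0,1]\times TM$, and chooses \emph{one} closed form $\alpha_W$ with $[\alpha_W]=\tfrac12 e(F_W)$ whose restriction to the collar part $[0,1]\times STM$ is $\widetilde\omega_\partial$. Then $(W,\alpha_W)$ computes $\delta(f_1)$, while $(W_0,\alpha_W|_{ST^vW_0})$ with $W_0=W\setminus([0,1]\times M)$ computes $\delta(f_0)$ (and $\Sign W=\Sign W_0$), so the difference of the two integrals is literally the integral over the collar; no Stokes argument and no interpolating form on a $7$-manifold are needed. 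Both arguments reduce to an extension problem for a closed $2$-form, but the paper's version of that problem is strictly easier, which brings me to the one real issue in your write-up.

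The gap is in the construction of $\widetilde\alpha$. You prescribe closed restrictions on all three faces $\{0\}\times ST^vW$, $\{1\}\times ST^vW$ and $[0,1]\times STM$, and you claim the obstruction to a closed extension vanishes because the cohomology classes match face by face. That is not sufficient: the obstruction is that the class of the glued boundary form lie in the image of $H^2([0,1]\times ST^vW;\RR)\to H^2(\partial([0,1]\times ST^vW);\RR)$, and by Mayer--Vietoris a class on the union of the faces is determined by its restrictions to the faces only up to a contribution from $H^1$ of the corners $\{0,1\}\times STM$. Concretely, replacing $\alpha_0$ by $\alpha_0+d\mu$ with $d\mu|_{STM}=0$ but $[\mu|_{STM}]$ nonzero and not in the image of $H^1(ST^vW;\RR)$ leaves $\alpha_0$ admissible yet changes this discrepancy class, so for some admissible choices of $\alpha_0,\alpha_1$ your $\widetilde\alpha$ does not exist. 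The fix is to reverse the order of construction: first build a closed $\widetilde\alpha$ on $[0,1]\times ST^vW$ representing the pullback of $\tfrac12 e(F_W)$ with only the restriction $\widetilde\alpha|_{[0,1]\times STM}=\widetilde\omega_\partial$ prescribed (this extension exists because $[\widetilde\omega_\partial]=\tfrac12 e(F_W)|_{STM}$ under $H^2([0,1]\times STM;\RR)\cong H^2(STM;\RR)$, which is the identification you already verified), and then \emph{define} $\alpha_i:=\widetilde\alpha|_{\{i\}\times ST^vW}$; these are admissible for computing $\delta(f_i)$ by the corollary, and the rest of your argument goes through unchanged.
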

\begin{proof}
We take a compact 4-manifold $W$ with $\partial W=M$ and its Euler characteristic is zero.
Take a collar neighborhood 
$[0,1]\times\partial M$
of $M=\partial W$ in $W$ such that $\{1\}\times M=\partial W$.
Set $W_0=W\setminus([0,1]\times M)$.
We can take $T^vW$ as $T^vW|_{[0,1]\times M}=[0,1]\times TM$.
Thus $ST^vW|_{[0,1]\times M}$ is identified with $[0,1]\times \partial C_2(M)$.
Take a closed 2-form $\alpha_W\in\Omega^2(ST^vW;\RR)$ satisfying 
$\alpha_W|_{[0,1]\times STM}=\widetilde{\omega}_{\partial}$ and
$[\alpha_W]=\frac{1}{2}e(F_W)$.
Then we have
\begin{eqnarray*}
\delta(f_1)-\delta(f_0)&=&
\left(\int_{ST^vW}\alpha_W^3-\frac{3}{4}\Sign W\right)
-\left(\int_{ST^vW_0}(\alpha_W|_{ST^vW_0})^3-\frac{3}{4}\Sign W_0\right)\\
&=&\int_{[0,1]\times STM}(\alpha_W|_{[0,1]\times STM})^3\\
&=&\int_{[0,1]\times \partial C_2(M)}\widetilde{\omega}_{\partial}^3.
\end{eqnarray*}

\end{proof}
By the above two lemmas,
$$Z_1((M,f_0),\rho)-(\dim\mathfrak g)^2\delta(f_0)
=Z_1((M,f_1),\rho)-(\dim\mathfrak g)^2\delta(f_1).$$
Namely, 
$Z_1((M,f),\rho)-(\dim\mathfrak g)^2\delta(f)$
is independent of the choice of a framing $f$.

\end{document}